\documentclass[11pt,letterpaper]{article}
\usepackage{graphicx} 
\usepackage[margin=1in]{geometry}

\usepackage{amsmath}
\usepackage{amssymb}
\usepackage{amsthm}
\usepackage{hyperref}
\usepackage{xcolor}
\usepackage{epsfig}
\usepackage{enumitem}

\DeclareMathOperator{\supp}{supp}

\usepackage{enumitem}
\setlist{leftmargin=6mm,nolistsep,noitemsep}

\usepackage[capitalize,noabbrev]{cleveref}
\crefname{question}{Question}{Questions}
\crefname{step}{Step}{Steps}
\crefname{claim}{Claim}{Claims}
\crefname{problem}{Problem}{Problems}
\crefname{definition}{Definition}{Definitions}
\crefname{observation}{Observation}{Observations}

\DeclareMathOperator{\conv}{conv}

\newcommand{\QP}{\text{QP}}

\def\tw{{\rm ptw}}
\def\itw{{\rm itw}}
\def\poly{{\rm poly}}

\newcommand{\ie}{i.e., }

\def\R{{\mathbb R}}
\def\Z{{\mathbb Z}}
\def\Q{{\mathbb Q}}
\def\X{{\mathcal X}}

\newtheorem{theorem}{Theorem}
\newtheorem{corollary}{Corollary}
\newtheorem{proposition}{Proposition}

\newtheorem{lemma}{Lemma}

\theoremstyle{remark}
\newtheorem{remark}{Remark}

\newcommand{\C}{\mathcal C}

\newcommand{\NP}{\mathcal {NP}}
\renewcommand{\L}{\mathcal L}

\usepackage[title]{appendix}

\title{A polynomial-time solvable class of sparse box-constrained polynomial optimization problems
\thanks{The author was supported in part by AFOSR grant FA9550-23-1-0123 and ONR grant N00014-25-1-2491.}}

\author{
Aida Khajavirad
\thanks{Department of Industrial and Systems Engineering,
             Lehigh University.
             E-mail: {\tt aida@lehigh.edu}.
             }
}

\begin{document}

\maketitle

\begin{abstract}
We study the problem of minimizing a multivariate polynomial function over the unit hypercube. Exploiting sparsity in the interaction graph or hypergraph, we identify variables that can be restricted to binary values at optimality and eliminate the remaining continuous variables component-wise, reducing the problem to structured binary polynomial optimization. For quadratic objectives, we obtain exact polynomial-time solvability in the standard Turing bit model under conditions involving treewidth and the nonconvexity of the continuous components. For higher-degree objectives, we obtain an exact polynomial-time algorithm in the unit-cost algebraic RAM model under logarithmic incidence treewidth and small, weakly coupled continuous components.
\end{abstract}

\emph{Keywords:} box-constrained polynomial optimization, binary polynomial optimization, sparsity, hypergraph, treewidth, polynomial-time algorithm.

\section{Introduction}
We consider the problem of minimizing a multivariate polynomial function of degree $d \geq 2$ over the unit hypercube. Let $x \in \R^n$ and $\alpha \in \Z^n_{\geq 0}$. Define $x^\alpha := x^{\alpha_1}_1 \ldots x^{\alpha_n}_n$ and $|\alpha| := \sum_{i=1}^n{\alpha_i}$. We define a \emph{box-constrained polynomial optimization problem} of degree $d$ as follows:
\begin{align}\label{polyopt}
    {\rm min} \; &\sum_{|\alpha| \leq d}{c_{\alpha} x^{\alpha}}\\
    {\rm s.t.} \;\; & x \in [0,1]^n, \nonumber
\end{align}
where $c_{\alpha} \in \Q$ for all $\alpha \in \Z^n_{\geq 0}$ with $|\alpha| \leq d$, and we assume that $c_{\alpha} \neq 0$ for at least one $\alpha$ with $|\alpha| = d$.
Throughout the paper, we assume that the input polynomial is given explicitly in the standard monomial basis, that is, as a list of pairs $(\alpha, c_\alpha)$ with $c_\alpha \neq 0$. The input length $\L$ denotes the total number of bits required to encode the nonzero coefficients and their corresponding exponent vectors. 
We assume that the degree $d$ is polynomially bounded in $\L$; that is, $d \in O(\L^c)$ for some fixed constant $c > 0$. In particular, $d$ is not fixed and may grow with the instance.
Problem~\eqref{polyopt} is $\NP$-hard even for $d=2$, as it contains the maximum cut problem as a special case. If $d=2$ and the problem is convex, \ie if the Hessian of the quadratic function is positive semidefinite, then thanks to the ellipsoid method,  Problem~\eqref{polyopt} can be solved in time polynomial in $\L$~\cite{TarKha79}. If $d \geq 3$ and the polynomial function is convex over the unit hypercube, then an $\epsilon$-optimal solution can be computed in time polynomial in $\L$ and $\log(1/\epsilon)$ using the ellipsoid method~\cite{GroLovSch12} or interior point methods~\cite{NesNem94}. 
However, if Problem~\eqref{polyopt} is nonconvex, then its tractability has only been established in fixed dimension. Namely,
if $n$ is fixed, then Problem~\eqref{polyopt} is solvable in time polynomial in $\L$ via quantifier-elimination techniques~\cite{renegar92}.

\paragraph{Binary polynomial optimization.} In contrast to the continuous setting, the complexity of binary polynomial optimization, \ie the problem of minimizing a multivariate polynomial over the set of 
binary points, has been extensively studied in the literature~\cite{craHanJau90,BorHam02,dPKha23,dPDiG23ALG,dpAk24,CapDelDiG23}. To formally define this problem, we use the hypergraph representation scheme of~\cite{dPKha17MOR}. A hypergraph $G$ is a pair $(V,E)$, where $V$ is a finite set of nodes and $E$ is a set of subsets of $V$ of cardinality at least two, called the edges of $G$. Define $d:= \max_{e \in E} |e|$.
With any hypergraph $G= (V,E)$ and the cost vector $c \in \Q^{V \cup E}$, we associate the following \emph{binary polynomial optimization problem of degree $d$}:
\begin{align}
\label{prob BPO}
\min \quad & \sum_{e\in E} {c_e \prod_{i\in e} {z_i}}+\sum_{i\in V} {c_i z_i} \\
{\rm s.t. } \quad & z \in \{0,1\}^V,\nonumber
\end{align}
where, without loss of generality, we assume that each node is contained in at least one edge, and $c_e \neq 0$ for all $e \in E$. Problem~\eqref{prob BPO} is $\NP$-hard even for $d=2$. 
If the objective function of Problem~\eqref{prob BPO} is submodular, then this problem can be solved in strongly polynomial time~\cite{schrijver2000}. 
By strongly polynomial time, we mean time that is polynomial in $|V|,|E|$.
Given a hypergraph $G=(V,E)$, the \emph{intersection graph of $G$} is the graph with node set $V$, and where two nodes
$i,j \in V$ are adjacent if $i,j \in e$ for some $e \in E$. Henceforth, we refer to the treewidth of the intersection graph of a hypergraph $G$ as the \emph{primal treewidth} of $G$ and denote it by $\tw(G)$. In~\cite{craHanJau90}, the authors prove that if $\tw(G)$ is bounded, then Problem~\eqref{prob BPO} can be solved in strongly polynomial time. Given a hypergraph $G=(V,E)$, the~\emph{incidence graph of $G$}
is a bipartite graph whose vertex set is $V \cup E$ and the edge set is $\{\{i, e\} : i \in V, \ e \in E, \ i \in e\}$. Henceforth, we refer to the treewidth of the incidence graph of a hypergraph $G$ as the \emph{incidence treewidth} of $G$ and denote it by $\itw(G)$.
For any hypergraph $G$, we have  $\itw(G) \leq \tw(G)$.
In~\cite{CapDelDiG23}, the authors prove that if $\itw(G)$ is bounded, then Problem~\eqref{prob BPO} can be solved in strongly polynomial time. 
Finally,  in~\cite{dPDiG23ALG}, the authors prove that if the hypergraph $G$ is $\beta$-acyclic, then Problem~\eqref{prob BPO} can be solved in strongly polynomial time. 
Note that the incidence treewidth of a $\beta$-acyclic hypergraph may be unbounded, in general.

Let us consider the important special case of Problem~\eqref{prob BPO} with $d=2$. With any graph $G=(V,E)$ and cost vector $c \in \Q^{V\cup E}$, we associate the \emph{binary quadratic optimization problem}: 
\begin{align}
\label{prob QPO}
\min \quad & \sum_{\{i,j\}\in E} {c_{ij} z_i z_j}+\sum_{i\in V} {c_i z_i} \\
{\rm s.t. } \quad & z \in \{0,1\}^V.\nonumber
\end{align}
First, for a graph $G$ we have $\tw(G) = \itw(G)$. Second, for  a graph, the notion of $\beta$-acyclicity in hypergraphs coincides with the usual notion of graph acyclicity.  
Therefore, for Problem~\eqref{prob QPO} the most general sufficient condition for polynomial-time solvability in terms of treewidth is as follows: if the graph $G$ has bounded treewidth, then Problem~\eqref{prob QPO} can be solved in strongly polynomial time. In fact, the results of~\cite{ChaSreHar08,dpAk24} imply that under certain complexity assumptions, bounded treewidth is essentially a necessary and sufficient condition for polynomial-time solvability of Problem~\eqref{prob QPO}.

\paragraph{Our contributions.} In this paper, by building on existing algorithms for binary polynomial optimization and box-constrained polynomial optimization, we obtain new sufficient conditions for the exact solution of Problem~\eqref{polyopt}.
For quadratic objectives, our algorithm runs in time polynomial in $\L$ in the standard Turing bit model. For higher-degree objectives, we obtain a polynomial-time algorithm in the unit-cost algebraic RAM model.

First, we focus on the important special case of Problem~\eqref{polyopt} with $d=2$; namely, the 
\emph{box-constrained quadratic optimization problem}:
\begin{align}\label{pQP}
\min \quad & x^\top Q x + c^\top x \\
{\rm s.t.} \quad & x \in [0,1]^n, \nonumber
\end{align}
where $c \in \Q^n$ and $Q \in \Q^{n\times n}$ is a symmetric matrix. In~\cite{hladik21}, the authors prove that if the rank of $Q$ is fixed, then Problem~\eqref{pQP} can be solved in polynomial time.
We partition the variables according to the sign of the diagonal entries of $Q$: variables $x_i$ with $q_{ii}\leq0$ can be restricted to binary values in an optimal solution. Henceforth, we refer to these variables as \emph{hidden binary variables}, while the remaining variables form the continuous part. We decompose the continuous variables into connected components of the interaction graph and eliminate these components independently. We show that the complexity of this elimination can be controlled by a parameter measuring the nonconvexity of each continuous component, and obtain sufficient conditions involving this parameter and the treewidth of the resulting interaction graph under which Problem~\eqref{pQP} can be solved in polynomial time in the standard Turing bit model.  
The elimination produces a binary polynomial optimization problem with logarithmically bounded treewidth, which can then be solved efficiently by dynamic programming (see Theorem~\ref{th: quad} and Corollary~\ref{cor: quad}).

Next, we extend this approach to higher-degree box-constrained polynomial optimization problems. We again separate variables into a hidden binary part and a continuous part and decompose the continuous variables into connected components of the ``interaction'' hypergraph. When each continuous component has constant size and directly interacts with only a logarithmic number of binary variables, and when the incidence treewidth of the interaction hypergraph is logarithmically bounded, we can eliminate each continuous component locally using polynomial-time algorithms for fixed-dimensional polynomial optimization from real algebraic geometry. Unlike in the quadratic case, the optimal values obtained from these local continuous problems need not be rational; in general, they are real algebraic numbers. We therefore formulate the higher-degree result in the unit-cost algebraic RAM model. Using existing algorithms for binary polynomial optimization and exact sign determination of real algebraic expressions, we show that the resulting problem can be solved exactly in polynomial time in this model (see Theorem~\ref{th: polyopt}).

Related to our use of sparsity, Bienstock and Muñoz~\cite{BieMun18} show that box-constrained polynomial optimization problems with bounded primal treewidth admit polynomial-size linear programming approximations that, for any prescribed tolerance, yield a feasible point whose objective value is additively close to the global optimum, whereas our focus is on sufficient conditions for exact polynomial-time solvability.

\paragraph{Organization.}
The remainder of the paper is structured as follows. In
Section~\ref{sec: quad} we obtain sufficient conditions under which box-constrained quadratic optimization problems can be solved in polynomial time in the standard Turing bit model. In Section~\ref{sec: poly} we obtain sufficient conditions under which higher-degree box-constrained polynomial optimization problems can be solved in polynomial time in the unit-cost algebraic RAM model.

\section{Box-constrained quadratic optimization}
\label{sec: quad}

In this section, we consider the box-constrained quadratic optimization problem defined by~\eqref{pQP}. To exploit the sparsity of the objective function, we define an \emph{interaction graph} $G=(V,E)$ for this problem as follows. For each variable $x_i$, $i\in [n]:=\{1,\cdots,n\}$, we define a node $i$. Two distinct nodes $i$ and $j$ are adjacent if the coefficient $q_{ij}$ is nonzero.  Henceforth, given a graph $G$, we denote by ${\rm tw}(G)$ the treewidth of $G$.

\paragraph{The computational model.}
Throughout this section, running times are measured in the standard Turing bit model. All numerical input data are rational and encoded in binary, and $\L$ denotes their total binary encoding length. Thus, whenever we refer to
polynomial time in this section, we mean time polynomial in $\L$, accounting for the bit complexity of all arithmetic operations.

\subsection{Statement of main results}
We are now ready to state our sufficient conditions for polynomial-time solvability of Problem~\eqref{pQP}.

\begin{theorem}\label{th: quad}
    Let $G=(V,E)$ be the interaction graph of Problem~\eqref{pQP}. Define
    \begin{equation}\label{vplus}
V^+ := \{i \in V: q_{ii} > 0\}.
\end{equation}
Denote by $G_{V^+}$ the subgraph of $G$ induced by $V^+$, and denote by $\C$ the set of connected components of $G_{V^+}$. For each connected component $C \in \C$, define its neighborhood as:
\begin{equation}\label{nghood}
N(C):=\{i \in V \setminus C : \{i,j\} \in E, \; {\rm for \; some} \; j \in C\}.
\end{equation}    
Let $Q_C$ denote the principal submatrix of $Q$ indexed by $C$, and let $\kappa_C$ be the smallest integer in $\{0,\ldots,|C|-1\}$ such that every $(|C|-\kappa_C)\times(|C|-\kappa_C)$ principal submatrix of $Q_C$ is positive semidefinite. Denote by $G_{V \setminus V^+}$ the subgraph of $G$ induced by $V \setminus V^+$, and denote by
$\bar G$ the graph on $V \setminus V^+$ obtained from $G_{V \setminus V^+}$ by making each $N(C)$ a clique.
Suppose that the following conditions are satisfied:
\medskip
\begin{itemize}[leftmargin=1.0cm]
\item[(i)] ${\rm tw}(\bar G)\in O(\log |V|)$;
\item[(ii)]
$\kappa_C\log\left(1+\frac{|C|}{\kappa_C}\right)
\in O(\log |V|)$ for all $C\in\C$,
where we define $0 \log(1+\frac{|C|}{0}) :=0$.
\end{itemize}
\medskip
Then Problem~\eqref{pQP} can be solved in time polynomial in the input length $\L$ in the Turing bit model.
\end{theorem}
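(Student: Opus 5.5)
The plan is to show that some optimal solution has all variables in $V\setminus V^+$ binary, then eliminate each continuous component $C\in\C$ by enumeration, which produces a binary polynomial optimization problem on $V\setminus V^+$ whose interaction structure is contained in $\bar G$.

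\textbf{Step 1 (hidden binaries).} For $i\notin V^+$ we have $q_{ii}\le 0$, so with all other variables fixed the objective is concave (or affine) in $x_i$ on $[0,1]$. Its minimum over $[0,1]$ is therefore attained at an endpoint. Starting from any optimal solution, we round the coordinates in $V\setminus V^+$ one at a time without increasing the objective. Hence we may restrict to $z:=x_{V\setminus V^+}\in\{0,1\}^{V\setminus V^+}$. Since there are no edges between distinct components of $G_{V^+}$, once $z$ is fixed the objective separates as
\[
f_0(z)+\sum_{C\in\C}\phi_C(z_{N(C)}),
\]
where $f_0$ is the quadratic restricted to $V\setminus V^+$. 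Here
\[
\phi_C(z_{N(C)})=\min_{y\in[0,1]^C}\ y^\top Q_C y+(c_C+2Q_{C,N(C)}z_{N(C)})^\top y,
\]
which is a box-constrained QP whose linear term depends on $z$.

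\textbf{Step 2 (eliminating a component, the main obstacle).} I will express $\phi_C$ as a minimum over a polynomial-size family of candidate solutions, each affine in $z$ with rational data. Fix which coordinates of $y$ sit at $0$, at $1$, or are free. An optimal $y$ can be chosen so that $Q_C$ restricted to the free set $F$ is PSD, because otherwise a direction of negative curvature in $F$ can be followed until another bound becomes active. By the definition of $\kappa_C$, the local minimizer structure implies that at most $\kappa_C$ coordinates need to be at bounds, since every principal submatrix of size $\ge|C|-\kappa_C$ is PSD. The candidate patterns are therefore the choices of a bound set $B$ with $|B|\le\kappa_C$ together with a $0/1$ assignment on $B$. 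There are
\[
\sum_{k\le\kappa_C}\binom{|C|}{k}2^k\le \left(\tfrac{2e|C|}{\kappa_C}\right)^{O(\kappa_C)}=2^{O(\kappa_C\log(1+|C|/\kappa_C))}=\poly(|V|)
\]
such patterns, using (ii). For each pattern, the remaining problem over $F=C\setminus B$ is a convex QP on a box, since $Q_F$ is PSD. It still has box constraints on $F$, though, which I would handle in one of two ways. The first option is to refine the argument: take a minimizer with the largest active set, and note that the free set is then PSD and the KKT stationarity $Q_Fy_F=-\tfrac12(\text{linear})$ is a linear system. The second option is to simply evaluate, for each actual assignment $z_{N(C)}$, the convex QP over $[0,1]^F$ in polynomial time by the ellipsoid method~\cite{TarKha79}, which gives rational optimal values.

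The second route makes the table-building cleaner. Condition (i) gives $|N(C)|-1\le{\rm tw}(\bar G)\in O(\log|V|)$, because $N(C)$ is a clique of $\bar G$. So $\phi_C$ can be tabulated on all $2^{|N(C)|}=\poly(|V|)$ assignments, each by solving $\poly(|V|)$ convex QPs (one per pattern) and taking the minimum. All values are rational with polynomially bounded encoding length.

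\textbf{Step 3 (binary problem).} Each table $\phi_C$ is a pseudo-Boolean function on the clique $N(C)$. The final step is then dynamic programming over a tree decomposition of $\bar G$ of width $O(\log|V|)$, computed approximately in polynomial time, e.g.\ a constant-factor approximation in time $2^{O(w)}\poly$. Each term, whether an edge of $f_0$ or a table $\phi_C$, is assigned to a bag containing its support; such a bag exists because each support is a clique of $\bar G$. The DP enumerates the $2^{O(\log|V|)}$ assignments per bag, for polynomial total time. Recovering an optimal $y$ for each $C$ from the stored QP solutions yields an optimal $x$. The delicate point I expect is the justification in Step 2 that $\kappa_C$ bounds the number of active bounds at some optimum.
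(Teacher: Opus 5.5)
The overall plan matches the paper's. You restrict the hidden binaries, tabulate each $\phi_C$ on $\{0,1\}^{N(C)}$ by fixing at most $\kappa_C$ coordinates and solving convex box QPs, and then run dynamic programming over a width-$O(\log|V|)$ decomposition of $\bar G$. Using a table-factor DP in place of the paper's M\"obius expansion plus Crama--Hansen--Jaumard is a harmless variation.

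\textbf{The gap is in Step 2, exactly where you flagged it.} Positive semidefiniteness passes to \emph{smaller} principal submatrices, not larger ones. The definition of $\kappa_C$ gives that every principal submatrix of $Q_C$ of order at most $|C|-\kappa_C$ is PSD. If $\kappa_C\ge 1$, minimality of $\kappa_C$ means some principal submatrix of order $|C|-\kappa_C+1$ is \emph{not} PSD. So your sentence ``every principal submatrix of size $\ge|C|-\kappa_C$ is PSD'' is reversed. The conclusion you draw from it is also false: for $|B|<\kappa_C$ the problem over $[0,1]^{C\setminus B}$ need not be convex. As written, your algorithm would feed nonconvex box QPs to the ellipsoid method. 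The statement you hoped to prove, that some optimum has at most $\kappa_C$ active bounds, is not the right one either. With $Q_C\succ0$ (so $\kappa_C=0$), the unique optimum can be a vertex with all $|C|$ bounds active.

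\textbf{What is missing is a PSD filter together with a two-case argument.} For each pattern $(B,z_B)$ with $|B|\le\kappa_C$, first test whether $Q_{C\setminus B}\succeq0$, which takes polynomial time, and discard the pattern if the test fails.

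For correctness, let $y^*$ be optimal, with bound set $B^*$ and fractional set $F^*$. The coordinates in $F^*$ lie strictly inside $(0,1)$, so second-order necessary conditions give $Q_{F^*}\succeq0$. This is the ingredient in your first sentence of Step 2; the paper instead uses Vavasis' result to get $Q_{F^*}\succ0$ for a suitably chosen optimum.
\begin{itemize}
\item If $|B^*|\le\kappa_C$, the pattern $(B^*,y^*_{B^*})$ passes the filter, and its convex QP contains $y^*$.
\item If $|B^*|>\kappa_C$, choose any $B\subseteq B^*$ with $|B|=\kappa_C$. Then $Q_{C\setminus B}$ has order exactly $|C|-\kappa_C$, so it is PSD by definition, and again $y^*$ is feasible.
\end{itemize}
Every surviving QP is a restriction of the original problem, so its value is at least $\phi_C(z)$. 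Hence the minimum over surviving patterns equals $\phi_C(z)$. Your counting bound also absorbs the PSD checks, and with this repair the rest of your argument goes through.
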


\begin{remark}
Let us consider the two assumptions in Theorem~\ref{th: quad}; these assumptions control two different sources of complexity in Problem~\eqref{pQP}. Namely, assumption~(i) is concerned with the difficulty of the combinatorial component of Problem~\eqref{pQP}, while assumption~(ii) is concerned with the difficulty of its continuous component. 
Since for each $C \in \C$, $N(C)$ is a clique of the graph $\bar G$,
assumption~(i) implies that $|N(C)| \in O(\log |V|)$ for all $C \in \C$. 
On the other hand, assumption~(ii) has a natural interpretation depending on the size of $\kappa_C$ relative to $|C|$. In the worst case where  $\kappa_C=\Theta(|C|)$, we have
$
\kappa_C\log\big(1+\frac{|C|}{\kappa_C}\big)
=\Theta(|C|)
$,
and hence assumption~(ii) becomes
$|C|\in O(\log |V|)$.
On the other hand, if $\kappa_C=\Theta(1)$, then
$\kappa_C\log\big(1+\frac{|C|}{\kappa_C}\big)
=\Theta(\log |C|).
$
Since $|C|\leq |V|$, this term is automatically in $O(\log |V|)$. Therefore, in this regime, assumption~(ii)  places no restriction on the size of $C$ and is in fact redundant. 
\end{remark}

As we will detail in the next section, one can obtain the following more restrictive sufficient condition whose treewidth assumption is stated directly in terms of the original graph $G$.

\begin{corollary}\label{cor: quad}
    Let $G=(V,E)$ be the interaction graph of Problem~\eqref{pQP}. 
Denote by $G_{V^+}$ the subgraph of $G$ induced by $V^+$, where $V^+$ is defined by~\eqref{vplus}, and denote by $\C$ the set of connected components of $G_{V^+}$. 
Let $Q_C$ denote the principal submatrix of $Q$ indexed by $C$, and let $\kappa_C$ be the smallest integer in $\{0,\ldots,|C|-1\}$ such that every $(|C|-\kappa_C)\times(|C|-\kappa_C)$ principal submatrix of $Q_C$ is positive semidefinite.
Suppose that the following conditions are satisfied:
\medskip
\begin{itemize}[leftmargin=1.0cm]
\item[(i)] ${\rm tw}(G)\in O(\log |V|)$;
\item[(ii)]
$
|N(C)|+\kappa_C\log\left(1+\frac{|C|}{\kappa_C}\right)
\in O(\log |V|)$ for all $C\in\C$,
where $N(C)$ is defined by~\eqref{nghood} and where we define $0 \log(1+\frac{|C|}{0}) :=0$.
\end{itemize}
\medskip
Then Problem~\eqref{pQP} can be solved in time polynomial in the input length $\L$ in the Turing bit model.
\end{corollary}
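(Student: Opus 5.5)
The plan is to derive Corollary~\ref{cor: quad} directly from Theorem~\ref{th: quad}. Condition~(ii) of the corollary implies condition~(ii) of the theorem, because $|N(C)|\ge 0$. So the only thing to prove is that condition~(i) of the theorem follows from the corollary's hypotheses. Concretely, I will show
\[
{\rm tw}(\bar G)\le {\rm tw}(G)+\max_{C\in\C}|N(C)|,
\]
or some bound of the same form. Under (i) and (ii) of the corollary, the right-hand side is $O(\log|V|)$.

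To prove this bound, I start from a tree decomposition $(T,\{B_t\})$ of $G$ of width ${\rm tw}(G)$. I then build a decomposition of $\bar G$ in two steps.

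\emph{Contraction step.} The first idea is to contract each component $C\in\C$ to a single vertex. Since $C$ is connected, contraction yields a minor $G'$ of $G$, and treewidth is minor-monotone, so ${\rm tw}(G')\le {\rm tw}(G)$. Deleting the contracted vertex $v_C$ and turning its neighborhood $N(C)$ into a clique is exactly elimination of $v_C$, and the resulting graph, after doing this for all $C$, is $\bar G$. However, eliminating a vertex of degree $k$ can raise the treewidth up to $k-1$, so I will use a more concrete construction.

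\emph{Bag modification.} For each $C$, the set of nodes $t$ with $B_t\cap C\neq\emptyset$ induces a connected subtree $T_C$. This holds because $C$ is connected in $G$: each vertex of $C$ occupies a subtree, and adjacent vertices of $C$ have intersecting subtrees. Every $i\in N(C)$ has a neighbor in $C$, so its subtree meets $T_C$. I define new bags
\[
B'_t:=\bigl(B_t\setminus V^+\bigr)\cup\bigcup_{C:\,t\in T_C}N(C).
\]
I then need to check the three decomposition properties.
\begin{itemize}
\item \emph{Vertex coverage.} Every vertex of $V\setminus V^+$ still appears in some bag, since $B'_t\supseteq B_t\setminus V^+$.
\item \emph{Edge coverage.} Edges of $G_{V\setminus V^+}$ are covered by the original bags. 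Each clique $N(C)$ lies entirely in every bag $B'_t$ with $t\in T_C$.
\item \emph{Connectivity.} For $i\notin V^+$, the set of nodes whose new bag contains $i$ is the union of $i$'s original subtree and the subtrees $T_C$ for those $C$ with $i\in N(C)$. Each such $T_C$ meets $i$'s original subtree, so the union is connected.
\end{itemize}

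The main obstacle is bounding the width. A node $t$ may lie in $T_C$ for many components $C$ at once, so $|B'_t|$ could be as large as $|B_t|$ plus the sum of the corresponding $|N(C)|$. A node $t$ lies in at most $|B_t|$ distinct subtrees $T_C$, because each such $C$ contributes a distinct vertex of $B_t\cap V^+$. This gives $|B'_t|\le |B_t|(1+\max_C|N(C)|)$. That is $O(\log^2|V|)$, not $O(\log|V|)$.

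To fix this, I will replace each node $t$ by a path of copies, one for each $C$ with $t\in T_C$. The copy associated with $C$ receives the bag $(B_t\setminus V^+)\cup N(C)$. The remaining vertices of $B_t\setminus V^+$ are kept in every copy, which preserves connectivity along the path and across the original tree edges. Neighboring original nodes are joined through consistent copies; I expect to root $T$ and order the copies suitably. This ordering is the delicate step. Each resulting bag has size at most $|B_t|+\max_C|N(C)|$, which gives
\[
{\rm tw}(\bar G)\le {\rm tw}(G)+1+\max_C|N(C)|=O(\log|V|).
\]
Theorem~\ref{th: quad} then yields the corollary.

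If the path-of-copies construction turns out to be too messy, I will fall back on a simpler, weaker bound that is still sufficient for polynomial time when combined with the dynamic-programming cost in the theorem's proof.
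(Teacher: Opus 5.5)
\textbf{There is a genuine gap: your target inequality is false.} The path-of-copies refinement cannot be completed. The inequality it is meant to deliver, ${\rm tw}(\bar G)\le {\rm tw}(G)+1+\max_{C}|N(C)|$, fails in general, so no choice of ordering can rescue it.

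Locally, the obstruction is as follows. Suppose an edge $ts$ of $T$ lies in both $T_{C_1}$ and $T_{C_2}$. Take a vertex $u_1\in N(C_1)\setminus(B_t\cup B_s)$ that lies in no other relevant neighborhood. Among the copies of $t$ and $s$, it occurs only in $(t,C_1)$ and $(s,C_1)$. Likewise, a suitable $u_2\in N(C_2)$ occurs only in $(t,C_2)$ and $(s,C_2)$. The two copy-paths are joined by a single tree edge, so both endpoints of that edge must contain $u_1$ and $u_2$, and the neighborhoods pile up again.

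Globally, the bound fails on the following family.
\begin{itemize}
\item Let $V^-=\{v_{i,j}: i\in[wk],\, j\in[L]\}$, with the rows split into $w$ blocks of $k$ consecutive rows.
\item Let the only edges inside $V^-$ be $\{v_{bk,j},v_{bk+1,j}\}$.
\item For every block $b$ and every $j<L$, add a node $p_{b,j}\in V^+$ adjacent to the rows of block $b$ in columns $j$ and $j+1$.
\end{itemize}
All components are singletons, so $\kappa_C=0$ and $|N(C)|=2k$. The bags $\{p_{b,j-1},p_{b,j}: b\in[w]\}\cup\{v_{i,j},v_{i+1,j}\}$ (omitting undefined indices), taken in lexicographic order of $(j,i)$, form a path decomposition of $G$. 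Hence ${\rm tw}(G)\le 2w+1$. However, $\bar G$ contains the $wk\times L$ grid, so ${\rm tw}(\bar G)\ge \min(wk,L)$. Take $w=k=\Theta(\log|V|)$ and $L\ge wk$, padding with isolated variables if needed. Then all hypotheses of the corollary hold, yet ${\rm tw}(\bar G)=\Omega(\log^2|V|)$.

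So your first bag-modification construction is correct, with width at most $({\rm tw}(G)+1)(1+\max_C|N(C)|)-1$, and it is essentially tight. Your fallback does not give polynomial time: with treewidth $\Theta(\log^2|V|)$, the dynamic program of Proposition~\ref{bpoly} costs $2^{O(\log^2|V|)}$, which is only quasi-polynomial.

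\textbf{Comparison with the paper.} The paper's proof has the same architecture as yours: reduce to Theorem~\ref{th: quad} by bounding ${\rm tw}(\bar G)$. It closes this step by quoting Lemma~\ref{lemm:component} with $S=V^+$, which gives ${\rm tw}(\bar G)\le\max({\rm tw}(G),\max_C|N(C)|-1)$. That cited bound is even stronger than the one you aim for. It is refuted by the same family, and already by $G=K_{2,3}$ with $C$ a single vertex on the side of size two: removing $C$ and completing $N(C)$ yields $K_4$, whose treewidth is $3>\max(2,2)$.

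So the missing ingredient is not a cleverer tree decomposition. Under the corollary's hypotheses, hypothesis~(i) of Theorem~\ref{th: quad} can genuinely fail, so the reduction through ${\rm tw}(\bar G)$ cannot succeed as stated. A proof must take one of two routes:
\begin{itemize}
\item Strengthen the assumption, e.g.\ to $({\rm tw}(G)+1)(1+\max_C|N(C)|)\in O(\log|V|)$ together with the $\kappa_C$ condition. Under that assumption your bag-modification argument already completes the proof.
\item Solve the reduced binary problem by a method that does not pay $2^{{\rm tw}(\bar G)}$.
\end{itemize}
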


\begin{remark}
The structural quantities appearing in the assumptions of Theorem~\ref{th: quad} can be computed in time polynomial in $\L$ for instances satisfying these assumptions. The interaction graph $G=(V,E)$ can be constructed in polynomial time by inspecting the nonzero entries of the matrix $Q$. The set $V^+$ can be identified by checking the signs of the diagonal entries $q_{ii}$. The connected components of the induced subgraph $G_{V^+}$ and the corresponding neighborhoods $N(C)$ can be computed using standard graph traversal algorithms in time $O(|V|+|E|)$.
For each $C\in\mathcal{C}$, the parameter $\kappa_C$ can be determined by considering, for $r=0,1,\ldots,|C|-1$, all principal submatrices of $Q_C$ of order $|C|-r$ and finding the smallest $r$ for which all such submatrices are positive semidefinite. If $\kappa_C=0$, only the matrix $Q_C$ needs to be checked. Now suppose that $\kappa_C\geq1$. The total number of principal submatrices that need to be considered is at most
$$
\sum_{r=0}^{\kappa_C}\binom{|C|}{r}
\leq
(\kappa_C+1)\left(\frac{e|C|}{\kappa_C}\right)^{\kappa_C}.
$$
Since $\kappa_C\leq |C|$, we have
$
\kappa_C+1 \leq\left(1+\frac{|C|}{\kappa_C}\right)^{\kappa_C}
$
and
$
\frac{e|C|}{\kappa_C}
\leq
\left(1+\frac{|C|}{\kappa_C}\right)^2.
$
Therefore,
$$
\sum_{r=0}^{\kappa_C}\binom{|C|}{r}
\leq
\left(1+\frac{|C|}{\kappa_C}\right)^{3\kappa_C}
=
|V|^{O(1)},
$$
where the last equality follows from assumption~(ii). Positive semidefiniteness of each principal submatrix can be checked in polynomial time. Hence, under assumption~(ii), all parameters $\kappa_C$ can be computed in polynomial time. The graph $\bar G$ can be constructed in polynomial time from $G$ and the sets $N(C)$.
Moreover, although computing treewidth exactly is $\NP$-hard, there exists an algorithm that, given a graph $\bar G$ and an integer $k$, runs in time $2^{O(k)}|V|$ and either certifies that ${\rm tw}(\bar G)>k$ or constructs a tree decomposition of width $O(k)$~\cite{BodEtAl16}. Therefore, under assumption~(i), a tree decomposition of $\bar G$ of width $O(\log |V|)$ can be constructed in polynomial time.
\end{remark}

\begin{remark}\label{rem:example}
Assumptions~(i)-(ii) of Theorem~\ref{th: quad}
do not imply any nontrivial upper bound on the number of variables corresponding to $V^+$. In fact, it is possible to have both $|V^+| = \Theta(|V|)$ and $|V^-| = \Theta(|V|)$.
To see this, for each integer $m\geq1$, define a graph $G=(V,E)$ as follows. Let
$V^+:=\{p_1,\ldots,p_m\}$, $V^-:=\{z_1,\ldots,z_{m+1}\}$,
and let $V:=V^+\cup V^-$. The nodes in $V^-$ form a path, and each node $p_i\in V^+$ is adjacent to two consecutive nodes $z_i,z_{i+1}\in V^-$. More precisely,
$E:=\big\{\{z_i,z_{i+1}\}:i\in [m]\big\}
\cup \big\{\{p_i,z_i\},\{p_i,z_{i+1}\}:i\in [m]\big\}$.
Choose the diagonal coefficients of $Q$ so that $q_{p_ip_i}>0$ for every $i\in[m]$ and $q_{z_jz_j}\leq0$ for every $j\in[m+1]$, and choose the off-diagonal coefficients so that the interaction graph of $Q$ is $G$. Then the set defined in~\eqref{vplus} is precisely $V^+$.
Since there are no edges between distinct nodes in $V^+$, the connected components of $G_{V^+}$ are the singletons
$C_i:=\{p_i\}$ for all $i \in [m]$.
For each $i\in[m]$, we have $N(C_i)=\{z_i,z_{i+1}\}$.
Moreover, since $C_i$ is a singleton and $q_{p_ip_i}>0$, the matrix $Q_{C_i}$ is positive definite, and hence $\kappa_{C_i}=0$. Therefore, assumption~(ii) is satisfied.
The subgraph $G_{V^-}$ induced by $V^-$ is the path
$z_1-z_2-\cdots-z_{m+1}$. Furthermore, each set $N(C_i)=\{z_i,z_{i+1}\}$ already forms a clique in $G_{V^-}$. Consequently, making each $N(C_i)$ a clique adds no new edges, and hence
$\bar G=G_{V^-}$. It follows that
${\rm tw}(\bar G)=1$, so assumption~(i) is also satisfied. Finally, $|V|=2m+1$, $|V^+|=m$, $|V^-|=m+1$,
and therefore $|V^+|=\Theta(|V|)$ and $|V^-|=\Theta(|V|)$.
See Figure~\ref{fig:easyEx}.
\end{remark}

\begin{figure}[ht]
\centering
\includegraphics[width=0.4\textwidth]{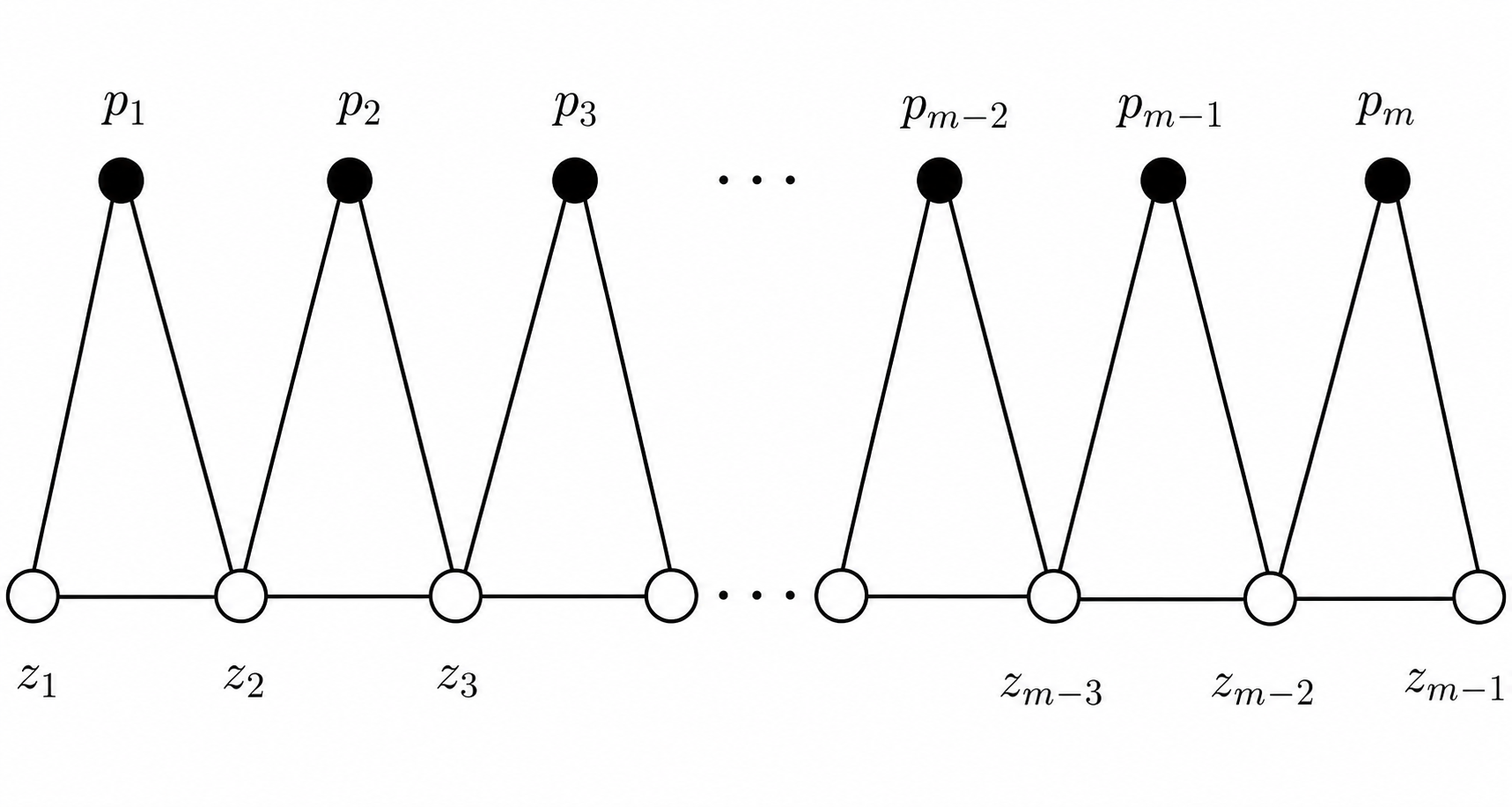}
\caption{
The example of Remark~\ref{rem:example} illustrating the application of Theorem~\ref{th: quad}.
The black nodes represent $V^+$, while the white nodes represent $V^-$
}
\label{fig:easyEx}
\end{figure}

\begin{remark}
    In~\cite{DeyIda25,Aida26}, the authors propose new convex relaxations for Problem~\eqref{pQP}. To this end, they associate a graph $G=(V,E,L)$ with this problem, where $V$ and $E$ are node set and edge set, respectively, of $G$ as defined for our interaction graph, and $L$ denotes the loop set, where $\{i,i\} \in L$, if $q_{ii} \neq 0$. They further define $L=L^- \cup L^+$, where $L^-$ (resp. $L^+$) contain all loops with $q_{ii} < 0$ (resp. $q_{ii} > 0$).
    Subsequently, they study the facial structure of the set:
\begin{align*}
\QP(G) := \conv\Big\{z \in \R^{V\cup E \cup L}: \; & z_{ii} \geq z^2_i, \; \forall \{i, i\} \in L^+, \; z_{ii} \leq z^2_i, \; \forall \{i, i\} \in L^-, \; z_{ij} = z_i z_j, \\
&\forall \{i,j\} \in E, \;  z_i \in [0,1], \forall i \in V\Big\},
\end{align*}
where $\conv(\cdot)$ denotes the convex hull. They obtain sufficient conditions under which $\QP(G)$ admits a polynomial-size second-order cone (SOC) or semidefinite programming (SDP) representable formulation that can be constructed in polynomial time. We next summarize these sufficient conditions. Suppose that ${\rm tw}(G) \in O(\log |V|)$. We have the following cases:
\medskip
\begin{enumerate}
    \item if $|C| =1$ and $|N(C)| \in O(\log |V|)$ for all $C \in \C$, then $\QP(G)$ admits a polynomial-size SOC-representable formulation that can be constructed in polynomial time (see theorem~3 in~\cite{DeyIda25} and corollary~2 in~\cite{Aida26}).

\item if $|C| =2$ and $|N(C)| \in O(\log |V|)$ for all $C \in \C$, then $\QP(G)$ admits a polynomial-size SDP-representable formulation that can be constructed in polynomial time (see theorem~6 in~\cite{Aida26}).
    \end{enumerate}
\medskip
Conditions~(1) and~(2) are special cases of Corollary~\ref{cor: quad} because assumption~(ii) can be equivalently stated as $|N(C)| \in O(\log |V|)$ and $\kappa_C\log(1+\frac{|C|}{\kappa_C}) \in O(\log |V|)$ for all $C \in \C$. Moreover, since by construction $\kappa_C \leq |C|$, we have $\kappa_C \log(1+\frac{|C|}{\kappa_C}) \leq \kappa_C \frac{|C|}{\kappa_C} = |C|$.
Second, the tightness of a polynomial-size SOCP or SDP relaxation of Problem~\eqref{pQP} does not imply its polynomial-time solvability because when the optimal solution of Problem~\eqref{pQP} is not unique,  the solution returned by the SOCP or SDP solver may not be feasible for the nonconvex problem~\eqref{pQP}. It is also important to note that Corollary~\ref{cor: quad} does not imply the results of~\cite{DeyIda25,Aida26} either, because polynomial-time solvability of Problem~\eqref{pQP} does not imply that $\QP(G)$ admits a polynomial-size extended formulation. 
\end{remark}

\subsection{Proof of Theorem~\ref{th: quad}}
To prove Theorem~\ref{th: quad}, we first present a number of intermediate results that will be used in the proof.
The first proposition is a celebrated result in discrete optimization. Recall that given a hypergraph $G=(V,E)$, the primal treewidth of $G$, denoted by $\tw(G)$ is the treewidth of its intersection graph.
In the following, by $\poly(|V|, |E|)$, we denote a polynomial function in $|V|, |E|$. 

\begin{proposition}[\cite{craHanJau90}]\label{bpoly}
Let $G=(V,E)$ be a hypergraph with the primal treewidth $\tw(G)=\kappa$. Then Problem~\eqref{prob BPO} can be solved by dynamic programming in $O(2^{\kappa}|V|)$ operations. In particular, if $\kappa \in O(\log \poly(|V|, |E|))$, then Problem~\eqref{prob BPO} is solvable in strongly polynomial time.    
\end{proposition}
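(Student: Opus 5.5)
The plan is the classical nonserial dynamic programming (variable elimination) argument of~\cite{craHanJau90}, applied along a tree decomposition of the intersection graph of $G$. First, I compute a tree decomposition $(T,\{B_t\}_{t\in T})$ of the intersection graph with width $O(\kappa)$ and $O(|V|)$ bags. This can be done either exactly for bounded $\kappa$, or approximately via~\cite{BodEtAl16} in $2^{O(\kappa)}|V|$ time; in the latter case the width is only $O(\kappa)$, so the stated bound becomes $2^{O(\kappa)}|V|$. I then make the decomposition nice. Every edge $e\in E$ is a clique in the intersection graph, so by the Helly property for subtrees of a tree some bag contains $e$. I assign each monomial $c_e\prod_{i\in e} z_i$, and each linear term $c_i z_i$, to one such bag. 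The objective then splits as a sum of local functions $f_t(z_{B_t})$ whose supports lie in single bags.

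Next, I root $T$ and process it bottom-up. For each node $t$ and each assignment $\sigma\in\{0,1\}^{B_t}$, I let $\mathrm{OPT}_t(\sigma)$ be the minimum, over binary assignments to the variables in bags of the subtree of $t$ that agree with $\sigma$ on $B_t$, of the sum of the local terms assigned to that subtree. The recurrences are the standard ones:
\begin{itemize}
\item at an introduce node, add the newly assigned terms;
\item at a forget node, take the minimum over the two values of the forgotten variable;
\item at a join node, add the two children's tables, correcting for terms that would otherwise be double counted.
\end{itemize}
Correctness follows from the separator property of tree decompositions: variables below $t$ interact with variables outside the subtree only through $B_t$. The optimum is the minimum over the root table.

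The count is $2^{|B_t|}\le 2^{\kappa+1}$ table entries per node over $O(|V|)$ nodes, giving $O(2^\kappa|V|)$ table operations. The cost of evaluating assigned monomials must also be absorbed: each edge is assigned once and evaluated per table entry. In the \emph{in particular} clause, if $\kappa\le c\log \poly(|V|,|E|)$ then $2^\kappa$ is polynomial in $|V|,|E|$. The total is therefore polynomial, and it is strongly polynomial because only additions and comparisons of input coefficients are performed.

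I expect the main obstacle to be the bookkeeping that makes the bound exactly $O(2^\kappa|V|)$ rather than $O(2^\kappa(|V|+|E|))$ or $2^{O(\kappa)}$. This requires charging the monomial evaluations carefully: sum them per bag via precomputed tables. It also depends on whether the decomposition is supplied or must be computed. I would state the operation count assuming a given optimal decomposition and treat the computation of an $O(\kappa)$-width decomposition as a separate polynomial step, which suffices for the \emph{in particular} claim.
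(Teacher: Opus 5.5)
Your proposal is correct and is essentially the argument behind the citation. The paper gives no proof of its own, and the ``basic algorithm'' of~\cite{craHanJau90} eliminates one variable at a time along a perfect elimination ordering of a $\kappa$-tree containing the intersection graph, which is exactly your forget-step dynamic program over a tree decomposition; since adjacent bags of such a $\kappa$-tree decomposition differ in a single vertex, this also justifies your $O(|V|)$ node count. Your caveats about the sharp $O(2^\kappa|V|)$ count (it assumes a given width-$\kappa$ decomposition and careful accounting of term evaluations) are fair, and the paper only uses the \emph{in particular} consequence, for which your route through the $2^{O(\kappa)}|V|$-time approximation of~\cite{BodEtAl16} suffices.
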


The next result is concerned with Problem~\eqref{pQP} and essentially states that the variables $x_i$ with $q_{ii} \leq 0$ can be treated as binary variables. We refer to such variables as \emph{hidden binary variables}.

\begin{lemma}\label{lem:binary}
Let $G=(V,E)$ be the interaction graph of Problem~\eqref{pQP}. Then  there exists an optimal solution $x^*$ of Problem~\eqref{pQP} such that $x^*_i \in \{0,1\}$ for all $i\in V \setminus V^+$, where $V^+$ is defined by~\eqref{vplus}.
\end{lemma}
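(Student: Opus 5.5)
The plan is to start from an arbitrary optimal solution and move the coordinates in $V \setminus V^+$ to the boundary one at a time, without increasing the objective. Optimal solutions exist because the objective $f(x) = x^\top Q x + c^\top x$ is continuous and $[0,1]^n$ is compact. Among all optimal solutions, I would pick $x^*$ that maximizes the number of indices $i \in V\setminus V^+$ with $x^*_i \in \{0,1\}$. The claim is then that this number equals $|V \setminus V^+|$.

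Suppose, for contradiction, that some $i \in V\setminus V^+$ has $x^*_i \in (0,1)$. Fix all other coordinates at their values in $x^*$ and consider the univariate function $g(t) := f(x^*_1,\ldots,x^*_{i-1},t,x^*_{i+1},\ldots,x^*_n)$ on $[0,1]$. Expanding $f$ gives
$$
g(t) = q_{ii} t^2 + \Big(2\sum_{j \neq i} q_{ij} x^*_j + c_i\Big) t + \text{const},
$$
using the symmetry of $Q$. Since $i \notin V^+$, we have $q_{ii} \leq 0$, so $g$ is concave on $[0,1]$ (affine when $q_{ii}=0$).

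A concave function on an interval attains its minimum at an endpoint. Hence $\min\{g(0),g(1)\} \leq g(x^*_i) = f(x^*)$. Replacing $x^*_i$ by a minimizing endpoint gives a feasible point whose value is at most $f(x^*)$, so it is also optimal. This replacement changes only coordinate $i$, so all other coordinates in $V\setminus V^+$ that were already binary stay binary. The new optimal point therefore has strictly more binary coordinates in $V\setminus V^+$, contradicting the choice of $x^*$.

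The argument could equally be phrased as a finite iteration over the indices in $V\setminus V^+$, fixing one coordinate per step. There is no real obstacle here. The one point to check is that rounding coordinate $i$ never disturbs coordinates already rounded, and this holds because each step modifies only a single coordinate.
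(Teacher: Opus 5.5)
Your proof is correct and essentially matches the paper's: both rest on the concavity of the univariate slice $t \mapsto f(x^*_{-i},t)$ when $q_{ii}\le 0$, and both move one coordinate at a time to an endpoint without losing optimality. The only difference is packaging: you phrase it as choosing an optimal solution that maximizes the number of binary coordinates in $V\setminus V^+$ and deriving a contradiction, while the paper simply iterates over the coordinates, which you also note as an equivalent formulation.
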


\begin{proof}
Let $x^*$ be a minimizer of Problem~\eqref{pQP}. Consider some $i \in V \setminus V^+$ and fix all other coordinates at their current values.
The resulting univariate function is concave and therefore has an endpoint with objective value no greater than the current value. Since the current point is optimal, equality must hold. Replace $x^*_i$
by that endpoint and proceed to the next coordinate. This preserves optimality throughout and eventually makes every coordinate in $V \setminus V^+$ binary.
\end{proof}

We make use of the following result~\cite{vavasis90} to obtain a polynomial-time face enumeration algorithm to solve certain box-constrained quadratic programs.

\begin{proposition}\label{vava}
 Let $x^*$ be a minimizer of Problem~\eqref{pQP} satisfying the following properties:
 \medskip
 \begin{itemize}
     \item [(i)] $x^*$ has the maximum number of elements that are equal to $0$ or $1$.
     \item [(ii)] If there are two or more minima satisfying condition~(i), then $x^*$ has the maximum number of elements that are equal to zero. 
 \end{itemize}
 \medskip
 Denote by $\bar x^\top \bar Q \bar x + \bar c^\top \bar x$ the reduced quadratic function obtained by substituting for binary elements of $x^*$ into the objective function of Problem~\eqref{pQP}. Then the matrix $\bar Q$ is positive definite.
\end{proposition}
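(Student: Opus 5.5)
The plan is to argue by contradiction using first- and second-order optimality conditions on the face of $[0,1]^n$ containing $x^*$ in its relative interior, and then to move along a null direction of $\bar Q$ until a new coordinate becomes binary. Let $F:=\{i\in V: 0<x^*_i<1\}$ be the set of fractional coordinates of $x^*$. The reduced function $\bar f(\bar x)=\bar x^\top \bar Q\bar x+\bar c^\top\bar x$ is defined on $[0,1]^F$, where $\bar Q=Q_F$ is the principal submatrix indexed by $F$ and $\bar c$ absorbs the linear terms coming from the fixed binary coordinates. If $F=\emptyset$ the statement is vacuous, so we assume $F\neq\emptyset$.

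\emph{Step 1: local conditions.} Since $x^*$ is a global minimizer of Problem~\eqref{pQP}, $\bar x^*:=x^*_F$ minimizes $\bar f$ over $[0,1]^F$. Because $\bar x^*$ lies in the open box $(0,1)^F$, it is an unconstrained local minimizer of $\bar f$. Hence the first-order condition $\nabla\bar f(\bar x^*)=2\bar Q\bar x^*+\bar c=0$ holds. The second-order necessary condition then gives $\bar Q\succeq 0$.

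\emph{Step 2: rule out singularity.} Suppose $\bar Q$ is not positive definite. Combined with $\bar Q\succeq0$, this means there is a nonzero $d\in\R^F$ with $\bar Q d=0$. For every $t\in\R$ we have
\[
\bar f(\bar x^*+td)=\bar f(\bar x^*)+t\,\nabla\bar f(\bar x^*)^\top d+t^2 d^\top\bar Q d=\bar f(\bar x^*),
\]
so the objective is constant along the line $\bar x^*+td$. Since $d\neq0$ and the box is bounded, the quantity $t^*:=\sup\{t\geq0:\bar x^*+td\in[0,1]^F\}$ is finite and positive. At $t=t^*$, all coordinates stay in $[0,1]$ and at least one coordinate $i\in F$ with $d_i\neq0$ reaches $0$ or $1$. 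Extending this point by the binary coordinates of $x^*$ gives a feasible point $x'$ with the same (optimal) objective value. Moreover, $x'$ has strictly more entries in $\{0,1\}$ than $x^*$, because the previously binary entries are unchanged. This contradicts property~(i), so $\bar Q\succ0$.

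Property~(ii) is not needed for positive definiteness; it only serves as a tie-breaking rule that makes the choice of $x^*$ canonical, which is convenient for the face-enumeration algorithm. The only mildly delicate point is Step~1: one must observe that the free coordinates are strictly interior, so that the Hessian condition applies without boundary multipliers. Once that is noted, the null-direction argument in Step~2 is straightforward.
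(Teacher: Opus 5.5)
Your argument is correct: stationarity of $x^*_F$ in the open box gives $2Q_F x^*_F+\bar c=0$ and $Q_F\succeq 0$, and a nonzero kernel direction of $Q_F$ would let you slide, at constant objective value, to an optimal point with an additional coordinate in $\{0,1\}$, contradicting~(i). The paper gives no proof of its own for this proposition and simply cites Vavasis~\cite{vavasis90}; your face-reduction argument is a standard way to prove it, and you are right that the tie-breaking condition~(ii) is not needed for positive definiteness in the box-constrained setting.
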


Thanks to Proposition~\ref{vava} we next obtain a simple algorithm for solving Problem~\eqref{pQP}, which in particular runs in polynomial time when the number of variables is fixed.

\begin{lemma}\label{lem:qp-kappa}
Consider Problem~\eqref{pQP}and let $\kappa \in \{0,\cdots,n-1\}$ be the smallest integer such that every $(n-\kappa)\times(n-\kappa)$ principal submatrix of $Q$ is positive semidefinite. If no such integer exists, set $\kappa := n$. Then Problem~\eqref{pQP} can be solved in time
\begin{equation}\label{tottime}
O\Big((\kappa+1)\Big(\frac{2en}{\kappa}\Big)^\kappa\poly(\L)\Big)
\end{equation}
if $\kappa\geq1$, and in time $\poly(\L)$ if $\kappa=0$.
\end{lemma}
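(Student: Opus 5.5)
The plan is a face-enumeration algorithm built on Proposition~\ref{vava}. Fix a minimizer $x^*$ chosen as in that proposition. Let $F\subseteq[n]$ be the set of coordinates where $x^*_i\in\{0,1\}$, and let $S:=[n]\setminus F$ be the free coordinates. By Proposition~\ref{vava}, the reduced matrix $\bar Q=Q_{S}$ (the principal submatrix indexed by $S$) is positive definite. The first step is to bound $|F|$. If $|F|<\kappa$, then $|S|\geq n-\kappa+1$. Then $Q_S$ is a positive definite principal submatrix of order larger than $n-\kappa$. Every principal submatrix of a positive definite matrix is positive definite, so every principal submatrix of $Q$ of order $n-\kappa+1$ contained in $S$ is PSD. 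That alone does not contradict minimality of $\kappa$, since only submatrices inside $S$ are controlled. So the contradiction argument has to be adjusted, and this is the main obstacle.

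The workaround is to use the definition of $\kappa$ in the other direction. By minimality, when $\kappa\geq1$ there exists a principal submatrix of order $n-\kappa+1$ that is not PSD. That fact alone does not force $|F|\geq\kappa$ either. Instead, I would enumerate directly over all candidate sets $F$ together with binary assignments on $F$ for which the reduced problem is convex. The complement $S$ must index a PD principal submatrix. I expect the intended bound to be $|F|\leq\kappa$ in the count, which suggests that the enumeration should range over sets $F$ with $|F|\leq\kappa$. The difficulty is showing that an optimal face exists with $|F|\leq\kappa$. My first attempt would be a modified argument: start from any optimal $x^*$, and if $|F|>\kappa$, try to free coordinates while keeping the objective value. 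I would test this against the count $\sum_{r\le\kappa}\binom{n}{r}2^r\le(\kappa+1)(2en/\kappa)^\kappa$, which matches~\eqref{tottime} exactly. So the enumeration is over all $F$ with $|F|\leq\kappa$ and all $2^{|F|}$ binary values on $F$.

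For each candidate $(F,b)$, substitute $x_F=b$ to obtain $\min\{x_S^\top Q_S x_S+\bar c^\top x_S : x_S\in[0,1]^S\}$. When $|S|\geq n-\kappa$, $Q_S$ is PSD, so this is a convex QP that can be solved exactly in $\poly(\L)$ time by the ellipsoid method~\cite{TarKha79}. The output is a rational optimal solution of polynomial bit size. Alternatively, when $Q_S$ is PD one can solve the unconstrained stationarity system $2Q_Sx_S=-\bar c$ and check feasibility, since Proposition~\ref{vava} says the free coordinates are interior. The algorithm returns the best point found over all candidates, and correctness follows because the face of $x^*$ is among them. For $\kappa=0$, $Q$ itself is PSD, so a single convex QP suffices, taking $\poly(\L)$ time. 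Each enumerated subproblem costs $\poly(\L)$, which gives~\eqref{tottime}.

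The step I expect to be hardest is justifying $|F|\leq\kappa$ for some optimal solution, that is, bounding the number of fixed coordinates. I would first try to combine the tie-breaking rules of Proposition~\ref{vava} with the fact that restricting $Q$ to any set of $n-\kappa$ coordinates yields a convex problem. Fixing any $\kappa$ coordinates at their optimal values leaves a convex subproblem whose optimal face can then be analyzed.
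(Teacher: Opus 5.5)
\textbf{There is a genuine gap.} Your correctness claim that ``the face of $x^*$ is among'' the enumerated candidates requires an optimal solution with $|F|\le\kappa$. You never prove this, and it is false in general. Take $n=1$ and $f(x)=x^2+x$: then $Q=[1]\succeq 0$, so $\kappa=0$, yet the unique minimizer $x^*=0$ has $|F|=1$. No tie-breaking via Proposition~\ref{vava} or ``freeing coordinates'' can establish it.

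The missing idea is that when $|F|>\kappa$ you do not need the exact face of $x^*$ in the enumeration. Pick any $B\subseteq F$ with $|B|=\kappa$ and fix $x_B=x^*_B\in\{0,1\}^B$; this pair is enumerated. The restricted problem has quadratic matrix $Q_{[n]\setminus B}$ of order exactly $n-\kappa$. By the definition of $\kappa$ this matrix is PSD, so the subproblem is a convex QP. Moreover, $x^*$ is feasible for it, and its feasible region lies in $[0,1]^n$, so its optimal value equals the global optimum. Proposition~\ref{vava} is needed only in the case $|F|\le\kappa$, to guarantee that the candidate $(F,x^*_F)$ itself gives a (strictly) convex problem.

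Your last sentence comes close to this idea but misses it in two ways. You fix ``any $\kappa$ coordinates at their optimal values'', which may be fractional and hence are not enumerated; you must fix $\kappa$ of the \emph{binary} coordinates of $x^*$. You also then try to analyze the optimal face, rather than observing that this convex subproblem already attains the optimum. Relatedly, your stationarity shortcut does not suffice for these candidates. Here $x^*$ typically has boundary coordinates in $[n]\setminus B$, so you need a genuine convex QP solver~\cite{TarKha79}.

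There is also an error in your algorithm: the claim ``When $|S|\geq n-\kappa$, $Q_S$ is PSD'' has the inequality reversed. The definition of $\kappa$, together with heredity of PSD-ness to principal submatrices, guarantees $Q_S\succeq 0$ only for $|S|\le n-\kappa$. For $|S|>n-\kappa$, i.e.\ $|F|<\kappa$, minimality of $\kappa$ means some principal submatrices of that order are not PSD. Since your enumeration always has $|S|\ge n-\kappa$, as written you would call a polynomial-time convex solver on possibly nonconvex QPs.

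The fix is to test $Q_S\succeq 0$ for each candidate, which takes polynomial time, and discard the candidates that fail. The argument above shows that some surviving candidate attains the optimum. Every candidate's value is at least the global optimum, so returning the best surviving solution is correct. With both fixes, your count $\sum_{r\le\kappa}2^r\binom{n}{r}\le(\kappa+1)(2en/\kappa)^\kappa$ yields~\eqref{tottime}, as in the paper.
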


\begin{proof}
For any $B\subseteq[n]$, denote by $Q_{[n]\setminus B}$ the principal submatrix of $Q$ indexed by $[n]\setminus B$. For each subset $B\subseteq[n]$ with $|B|\leq\kappa$ and each $z\in\{0,1\}^B$, consider the problem obtained from Problem~\eqref{pQP} by fixing $x_B=z$. The resulting problem is a box-constrained quadratic optimization problem in the variables $x_{[n]\setminus B}$, whose quadratic matrix is $Q_{[n]\setminus B}$. If $Q_{[n]\setminus B}$ is positive semidefinite, then the resulting problem is a convex quadratic optimization problem and can be solved in time polynomial in $\L$~\cite{TarKha79}.
Consider the following algorithm. For every $B\subseteq[n]$ with $|B|\leq\kappa$ and every $z\in\{0,1\}^B$, check whether $Q_{[n]\setminus B}$ is positive semidefinite. If so, solve the corresponding convex quadratic optimization problem obtained by fixing $x_B=z$. Among all solutions obtained in this way, return one having minimum objective value.

We show that at least one of the convex quadratic optimization problems considered by the algorithm has the same optimal value as Problem~\eqref{pQP}. Let $x^*$ be a global minimizer of Problem~\eqref{pQP} satisfying the properties of Proposition~\ref{vava}, and define
$B^*:=\big\{i\in[n]:x_i^*\in\{0,1\}\big\}$ and
$F^*:=[n]\setminus B^*$.
By Proposition~\ref{vava}, the quadratic matrix of the reduced problem obtained by substituting the binary components of $x^*$ into the objective function is positive definite. Since fixing variables changes only the linear and constant terms involving the remaining variables, this quadratic matrix is precisely $Q_{F^*}$. Hence,
$Q_{F^*}\succ 0$. Suppose first that $|B^*|\leq\kappa$. Consider the choice $B=B^*$ and $z=x^*_{B^*}$. Since $Q_{F^*}\succ0$, the corresponding restricted problem is convex and is therefore solved by the algorithm. Moreover, $x^*$ is feasible for this restricted problem. Since the feasible region of the restricted problem is contained in the feasible region of Problem~\eqref{pQP}, and $x^*$ is a global minimizer of Problem~\eqref{pQP}, it follows that $x^*$ is also a global minimizer of the restricted problem. Thus, this restricted problem has the same optimal value as Problem~\eqref{pQP}.

Now suppose that $|B^*|>\kappa$. Choose any subset $B\subseteq B^*$ with $|B|=\kappa$, and set $z=x_B^*$. By the definition of $\kappa$, every $(n-\kappa)\times(n-\kappa)$ principal submatrix of $Q$ is positive semidefinite. Since $|[n]\setminus B|=n-\kappa$, we have
$Q_{[n]\setminus B}\succeq 0$.
Hence, the problem obtained by fixing $x_B=z$ is convex and is considered by the algorithm. Again, $x^*$ is feasible for this restricted problem, and since $x^*$ is globally optimal for Problem~\eqref{pQP}, it is also optimal for the restricted problem. Therefore, this restricted problem has the same optimal value as Problem~\eqref{pQP}.
Thus, in either case, at least one of the convex quadratic optimization problems solved by the algorithm has the same optimal value as Problem~\eqref{pQP}. On the other hand, the feasible region of every restricted problem considered by the algorithm is contained in $[0,1]^n$, and hence its optimal value cannot be smaller than the optimal value of Problem~\eqref{pQP}. It follows that the best solution returned by the algorithm is a minimizer of Problem~\eqref{pQP}.
It remains to bound the running time. For each $r\in\{0,\ldots,\kappa\}$, there are $\binom{n}{r}$ subsets $B\subseteq[n]$ with $|B|=r$, and for each such subset there are $2^r$ vectors $z\in\{0,1\}^B$. Hence, the total number of restricted problems considered is
$\sum_{r=0}^{\kappa}2^r\binom{n}{r}$.
For $\kappa=0$, this quantity is equal to one. Suppose that $\kappa\geq1$. Using
$
\binom{n}{r}\leq\left(\frac{en}{r}\right)^r
$
for $1\leq r\leq n$, we obtain
$
2^r\binom{n}{r} \leq \left(\frac{2en}{r}\right)^r$.
The function $g(t)= t\log(2en/t)$ is increasing on $[1,n]$, since its derivative is $\log(2n/t)>0$ on this interval. Hence, for every $1\leq r\leq\kappa$, we have
$
\left(\frac{2en}{r}\right)^r
\leq
\left(\frac{2en}{\kappa}\right)^\kappa
$,
implying that,
$$
\sum_{r=0}^{\kappa}2^r\binom{n}{r}
\leq
(\kappa+1)
\left(\frac{2en}{\kappa}\right)^\kappa.
$$
For every pair $(B,z)$, positive semidefiniteness of $Q_{[n] \setminus B}$ can be checked in time polynomial in $\L$. Moreover, whenever $Q_{[n] \setminus B}$ is positive semidefinite, the corresponding box-constrained convex quadratic optimization problem can be solved in time polynomial in $\L$. Therefore, the total running time is
given by~\eqref{tottime}
for $\kappa\geq1$, while for $\kappa=0$ it is $\poly(\L)$, and this completes the proof.
\end{proof}

We are now ready to prove Theorem~\ref{th: quad}.

\begin{proof}[Proof of Theorem~\ref{th: quad}]
Define $V^- := V \setminus V^+$, where $V^+$ is defined by~\eqref{vplus}. Let $x_{V^-}$ (resp. $x_{V^+}$) contain the components $x_i$ of $x$ with $i \in V^-$ (resp. $i \in V^+$).  Denote by $f(x)$ the objective function of Problem~\eqref{pQP}. Then by Lemma~\ref{lem:binary}, the optimal value of Problem~\eqref{pQP} is equal to the optimal value of the following problem:
\begin{align*}
    \min \quad & f(x_{V^-}, x_{V^+})\\
    {\rm s.t.} \quad & x_{V^-}\in\{0,1\}^{V^-},\;
x_{V^+}\in[0,1]^{V^+}.
\end{align*}
Let $G_{V^+}$ be the subgraph of $G$ induced by $V^+$, and let $\C$ denote the set of connected components of $G_{V^+}$. Define
$$
f_{V^-}(x_{V^-}):= \sum_{i\in V^-} q_{ii}x_i^2
+2\sum_{\substack{\{i,j\}\in E:\\ i,j\in V^-}} q_{ij}x_ix_j+
\sum_{i\in V^-} c_i x_i,
$$
and, for each $C\in\C$, define
$$
f_C(x_C,x_{N(C)}):=\sum_{i\in C} q_{ii}x_i^2
+2\sum_{\substack{\{i,j\}\in E:\\ i,j\in C}} {q_{ij}x_ix_j}
+2\sum_{\substack{\{i,j\}\in E\\ i\in C,\ j\in N(C)}} {q_{ij}x_ix_j}+
\sum_{i\in C} {c_i x_i},
$$
where $N(C)$ is defined by~\eqref{nghood}.
Since the sets $C \in \C$ are the connected components of $G_{V^+}$ and $N(C) \subseteq V^-$ for all $C \in \C$, we deduce that:
$$
f(x_{V^-},x_{V^+}) = f_{V^-}(x_{V^-})+\sum_{C\in\C}
{f_C(x_C,x_{N(C)})}.
$$
Since $C \cap C' = \emptyset$ for any $C\neq C' \in \C$, for any fixed $x_{V^-}$ we have:
\begin{equation}\label{e1}
\min_{x_{V^+} \in [0,1]^{V^+}} f(x_{V^-},x_{V^+})
= f_{V^-}(x_{V^-}) +
\sum_{C\in \C} \min_{x_C \in [0,1]^C} f_C(x_C,x_{N(C)}).
\end{equation}
Now, for each $C \in \C$, and for each  $z\in\{0,1\}^{N(C)}$, define
$$
\psi_C(z):= \min_{x_C\in[0,1]^C} f_C(x_C,z).
$$
From~\eqref{e1} we deduce that:
$$
\min_{\substack{x_{V^-} \in \{0,1\}^{V^-}\\x_{V^+} \in [0,1]^{V^+}}} f(x_{V^-},x_{V^+})=
\min_{x_{V^-}\in\{0,1\}^{V^-}}\left(f_{V^-}(x_{V^-})
+\sum_{C\in\C}\psi_C(x_{N(C)})\right).
$$
Thus, once we compute $\psi_C(z)$ for all $z\in\{0,1\}^{N(C)}$ and for all $C \in \C$, the original continuous problem is reduced to a binary optimization problem.  Let us now consider the complexity of computing the function values $\psi_C(z)$, $z \in \{0,1\}^{N(C)}$ for all $C \in \C$. For each fixed $z \in \{0,1\}^{N(C)}$, computing $\psi_C(z)$ amounts to minimizing a quadratic function over $[0,1]^C$. From assumption~(ii) in the statement of the theorem it follows that $\kappa_C\log\big(1+\frac{|C|}{\kappa_C}\big)
\in O(\log |V|)$ for all $C\in\C$, where the expression is defined to be zero when $\kappa_C=0$. If $\kappa_C=0$, then by Lemma~\ref{lem:qp-kappa}, $\psi_C(z)$ can be computed in time $\poly(\L)$.  Now suppose that $\kappa_C\geq1$. By Lemma~\ref{lem:qp-kappa}, for a fixed $z$, $\psi_C(z)$ can be computed in time
$$
O\Big(
(\kappa_C+1)
\left(\frac{2e|C|}{\kappa_C}\right)^{\kappa_C}
\poly(\L)
\Big).
$$
Since $\kappa_C\leq |C|$, we have
$
\kappa_C+1
\leq 2^{\kappa_C}
\leq
\left(1+\frac{|C|}{\kappa_C}\right)^{\kappa_C}.
$
Moreover, setting $t:=\frac{|C|}{\kappa_C}\geq1$, we have
$2et\leq 8t\leq(1+t)^3$,
and hence
$
\left(\frac{2e|C|}{\kappa_C}\right)^{\kappa_C}
\leq
\left(1+\frac{|C|}{\kappa_C}\right)^{3\kappa_C}.
$
It follows that
$$
(\kappa_C+1)
\left(\frac{2e|C|}{\kappa_C}\right)^{\kappa_C}
\leq
\left(1+\frac{|C|}{\kappa_C}\right)^{4\kappa_C}.
$$
From assumption~(ii) it follows that
$
\left(1+\frac{|C|}{\kappa_C}\right)^{4\kappa_C}
\in |V|^{O(1)}.
$
Hence, for each fixed $z\in\{0,1\}^{N(C)}$, the value $\psi_C(z)$ can be computed in $O(|V|^{O(1)}\poly(\L))$ time.
Moreover, since $N(C)$ is a clique of $\bar G$, from assumption~(i) it follows that $|N(C)| \in O(\log |V|)$. 
Therefore, $\psi_C(z)$ for all $z \in \{0,1\}^{N(C)}$ can be computed in $O(|V|^{O(1)} \poly(\L))$ time. 
Finally, we have $|\C| \leq |V|$. 
Therefore, all such functional computations can be done in time polynomial in the input length $\L$.
Moreover, the algorithm in Lemma~\ref{lem:qp-kappa} computes $\psi_C(z)$ by solving a polynomial number of convex quadratic optimization problems with rational data of encoding length polynomial in $\L$. The optimal value of each such problem can be computed exactly and has encoding length polynomial in $\L$. Consequently, $\psi_C(z)$ can be computed exactly and also has encoding length polynomial in $\L$. Hence, all values defining the functions $\psi_C$ can be represented using a polynomial number of bits.

Finally, let us consider the cost of solving the binary optimization problem: 
\begin{align}\label{reduceproblem}
\min \quad & f_{V^-}(x_{V^-})
+\sum_{C\in\C}\psi_C(x_{N(C)})\\
{\rm s.t.} \quad & x_{V^-}\in\{0,1\}^{V^-}\nonumber.
\end{align}
Since each term $\psi_C(x_{N(C)})$ depends only on the binary variables $x_i$, $i\in N(C)$, it admits a unique multilinear representation in these variables. Moreover, since assumption~(i) implies $|N(C)|\in O(\log |V|)$, this representation can be constructed from the values of $\psi_C$ in polynomial time. It contains at most $2^{|N(C)|}=|V|^{O(1)}$ monomials, and each of its coefficients is a sum of at most $2^{|N(C)|}$ values $\psi_C(z)$. Hence, each coefficient has encoding length polynomial in $\L$. Since $|\C|\leq |V|$, Problem~\eqref{reduceproblem} admits a binary polynomial representation of total encoding length polynomial in $\L$.
Therefore, Problem~\eqref{reduceproblem} can be represented as a binary polynomial optimization problem of polynomial encoding length. Denote by $G_{V^-}$ the subgraph of $G$ induced by $V^-$. Denote by $\bar G$ the graph obtained from $G_{V^-}$ by making each $N(C)$ a clique for all $C \in \C$. It then follows that  the intersection graph of the  hypergraph of the objective function of Problem~\eqref{reduceproblem}, denoted by $H$, is a subgraph of $\bar G$.  From assumption~(i) we deduce that
$
{\rm tw}(H) \leq 
{\rm tw}(\bar G)
\in  O(\log |V|)
$.
Therefore, by Proposition~\ref{bpoly}, Problem~\eqref{reduceproblem} can be solved in time polynomial in $|V| 2^{O(\log |V|)} {\rm poly}(\L) = |V|^{O(1)} \cdot {\rm poly}(\L)$.  Let $x^*_{V^-}$ be an optimal solution of Problem~\eqref{reduceproblem}. For each $C\in\C$, let $x_C^*$ be an optimal solution of the problem defining $\psi_C(x^*_{N(C)})$, that is,
$$
x_C^*\in
\arg\min_{x_C\in[0,1]^C}
f_C(x_C,x^*_{N(C)}).
$$
Since the components in $\C$ are pairwise disjoint,  $x_C^*$, $C\in\C$, together with $x^*_{V^-}$ define a vector $x^*\in[0,1]^V$. By~\eqref{e1} and the definition of $\psi_C$, this vector attains the optimal value of Problem~\eqref{pQP}.
Moreover, by Lemma~\ref{lem:qp-kappa}, each $x_C^*$
can be computed in polynomial time.
Hence, an optimal solution of Problem~\eqref{pQP} can also be recovered in polynomial time and this completes the proof.
\end{proof}

\subsection{Proof of Corollary~\ref{cor: quad}}

To prove Corollary~\ref{cor: quad}, it suffices to show that if the interaction graph $G$ has treewidth $O(\log |V|)$ and the neighborhoods $N(C)$ have logarithmic size, then the graph obtained from $G$ by removing the components of $G_{V^+}$ and making their neighborhoods cliques also has treewidth $O(\log |V|)$. The next two lemmas establish this fact.
In the following, given a graph $G = (V,E)$ and a node $v \in V$, we say that the graph $G-v:=(V',E')$ is obtained from $G$ by \emph{removing} $v$ if $V'= V \setminus \{v\}$ and $E'=\{\{i,j\} \in E: i \neq v, j \neq v \}$.

\begin{lemma}[\cite{Aida26}]\label{lem:single-step}
Let $G=(V,E)$ be a graph.  
Let $C\subseteq V$ be such that the subgraph of $G$ induced by $C$ is a connected graph.  
Denote by $G'$ a graph obtained from $G$ by removing all nodes in $C$ and then making $N(C)$ a clique, \ie adding all missing edges between pairs of nodes in $N(C)$, where $N(C)$ is defined by~\eqref{nghood}. Then the treewidth of $G'$ is upper bounded by
\begin{equation}\label{newwidth}
{\rm tw}(G') \leq \max({\rm tw}(G), \; |N(C)|-1).
\end{equation}
\end{lemma}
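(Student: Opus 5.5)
We will work directly with a tree decomposition of $G$ and modify it into one for $G'$. Fix a tree decomposition $(T,\{X_t\}_{t\in T})$ of $G$ of width ${\rm tw}(G)$. Removing $C$ is harmless: replacing each bag $X_t$ by $X_t\setminus C$ gives a tree decomposition of $G-C$ of width at most ${\rm tw}(G)$. The only difficulty is the new clique on $N(C)$. Our strategy is to contract $C$ into a single vertex, which makes $N(C)$ exactly the neighborhood of one node, and then to find a place in the tree where the whole of $N(C)$ can be covered by adding a single bag.

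Concretely, since $G[C]$ is connected, the set $T_C:=\{t\in T: X_t\cap C\neq\emptyset\}$ induces a connected subtree of $T$. This holds because each $T_v$ for $v\in C$ is a subtree, and adjacent nodes $u,v\in C$ have intersecting subtrees. Take any $w\in N(C)$. Then $w$ is adjacent to some $j\in C$, so some bag contains both $w$ and $j$, and therefore $T_w\cap T_C\neq\emptyset$. We now define new bags $Y_t:=(X_t\setminus C)$ for $t\notin T_C$ and $Y_t:=(X_t\setminus C)\cup\{c\}$ for $t\in T_C$, where $c$ is a new vertex representing the contracted set. This is a tree decomposition of the contracted graph $G/C$, and its width is at most ${\rm tw}(G)$, since we delete at least one vertex of $C$ from each bag of $T_C$ before adding $c$.

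Next, we add a new tree node $s$ with bag $Y_s:=N(C)$, attached to an arbitrary node $r\in T_C$, and we delete $c$ from all bags. The bag $Y_s$ covers every edge of the new clique. All edges of $G'$ that lie inside $G-C$ are still covered by the old bags, and the width of the new bag is $|N(C)|-1$.

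The main obstacle is the running-intersection property for $w\in N(C)$. The node set where $w$ now appears is $T_w\cup\{s\}$, and $s$ is adjacent only to $r$, which need not lie in $T_w$. To fix this, we also add $w$ to every bag on the path in $T$ from $r$ to the nearest node of $T_w\cap T_C$. This path can be chosen inside $T_C$, because $T_C$ is connected and meets $T_w$. However, this operation can increase the bag sizes. The step we will try first is to charge each added $w$ against the removed $C$-vertex or against $c$. The cleaner option is to avoid the paths altogether by choosing the attachment differently. We root $T_C$ and proceed by induction on $|N(C)|$, in the style of the standard fact that contracting a connected set and then eliminating the contracted vertex (making its neighborhood a clique) yields width at most $\max({\rm tw},\deg-1)$. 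This is precisely the elimination-ordering characterization: ${\rm tw}(H)$ equals the minimum, over elimination orderings, of the maximum degree at elimination time. We therefore plan to argue as follows. Contraction does not increase treewidth, so ${\rm tw}(G/C)\le{\rm tw}(G)$. In $G/C$, the vertex $c$ has neighborhood exactly $N(C)$, and $G'$ is obtained from $G/C$ by eliminating $c$. Take an optimal elimination ordering of $G'$ and prepend $c$ to it. This gives an ordering of $G/C$ of width $\max(|N(C)|,\,{\rm tw}(G'))$, which goes in the wrong direction.

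So the route we will actually take is the direct one. We apply the elimination-game lemma that eliminating a vertex $v$ of a graph $H$ produces a graph $H'$ with ${\rm tw}(H')\le\max({\rm tw}(H),\deg_H(v)-1)$, applied with $H:=G/C$, $v:=c$ and $\deg_H(c)=|N(C)|$, and we verify this lemma via the path-augmentation argument above. That verification, showing that the added path vertices never push a bag above $\max({\rm tw}(H),\deg(v)-1)+1$ vertices, is where we expect the real work to lie.
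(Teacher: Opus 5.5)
There is a genuine gap, and it cannot be closed, because the statement itself is false. Your reduction is sound up to the last step: ${\rm tw}(G/C)\le{\rm tw}(G)$, and $G'$ is $G/C$ with the contracted vertex $c$ eliminated. But the vertex-elimination lemma you then invoke, ${\rm tw}(H')\le\max({\rm tw}(H),\deg_H(v)-1)$, does not hold, so the ``real work'' you postpone cannot be done. Take $G=K_{2,3}$ with sides $\{c,v\}$ and $\{a,b,d\}$, and $C=\{v\}$. The bags $\{c,v,a\},\{c,v,b\},\{c,v,d\}$ on a path show ${\rm tw}(G)=2$. Since $|N(C)|=3$, the claimed bound is $2$. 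But removing $v$ and making $\{a,b,d\}$ a clique yields $K_4$, whose treewidth is $3$. The same example shows where your charging idea breaks. Deleting $C$ frees only $|X_t\cap C|$ slots in a bag, here one. Yet several vertices of $N(C)$ may have to be routed into that same bag: the root bag $\{c,v,b\}$ must become $\{c,a,b,d\}$.

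The failure is not an off-by-one either. Let $B$ be a clique on $b$ vertices joined completely to an independent set $A$ of size $a$, and let $v$ be a further vertex adjacent to all of $A$. The bags $B\cup\{v,u\}$, $u\in A$, give ${\rm tw}(G)\le b+1$, while $C=\{v\}$ has $N(C)=A$ and $G'=K_{a+b}$. With $a=b\ge 3$ this gives ${\rm tw}(G')=2a-1$, which exceeds even $\max({\rm tw}(G),|N(C)|)\le a+1$ by at least $a-2$.

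What your path-augmentation argument does prove is the additive bound ${\rm tw}(G')\le{\rm tw}(G)+|N(C)|-1$, since each bag of $T_C$ loses at least one vertex of $C$ and gains at most $|N(C)|$ vertices. For $|N(C)|\ge 2$ this sharpens to ${\rm tw}(G)+|N(C)|-2$, and the family above shows that bound is tight. To get it, contract the connected set $C\cup\{s_1\}$ onto some $s_1\in N(C)$. This gives a minor of $G$ containing $G-C$ together with all edges $s_1u$, $u\in N(C)$. Then add every vertex of $N(C)$ except $s_1$ and one other neighbour to all bags. So the lemma must be restated in an additive form rather than proved as written, and the later hypergraph lemma and the corollary, which rely on the max-type bound, need to be rechecked.
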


The next result is obtained by a repeated application of the above lemma.

\begin{lemma}[\cite{Aida26}]
\label{lemm:component}
Consider a graph $G=(V,E)$ and let $S\subseteq V$ be nonempty.  Denote by $G_S$ the subgraph of $G$ induced by $S$. Denote by $C_1, \cdots, C_p$ the connected components of $G_S$, for some $p \geq 1$. Define $G_0 := G$.
For each $i \in [p]$, let $G_i$ be the graph obtained from $G_{i-1}$ by removing the nodes in $C_i$ and adding edges between the pairs of nodes in $G_{i-1}-C_i$ that are both adjacent to some node in $C_i$ in $G_{i-1}$. Define 
\[
N(C_i):=\{\,u\in V\setminus C_i : \exists v\in C_i\text{ with } \{u, v\}\in E\}, \quad \forall i \in [p],
\]
and  
$d_{\max}:=\max_{i\in [p]}|N(C_i)|$. 
We then have
$$
{\rm tw}(G_p)\le \max\bigl({\rm tw}(G),\ d_{\max}-1\bigr),
$$
where $G_p$ is the graph obtained from $G$ after $p$ component removal and clique addition operations.
\end{lemma}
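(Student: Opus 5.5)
We argue by induction on $i\in\{0,1,\ldots,p\}$, with Lemma~\ref{lem:single-step} as the inductive step. The target invariant is
\[
{\rm tw}(G_i)\le \max\bigl({\rm tw}(G),\ d_{\max}-1\bigr).
\]
The base case $i=0$ holds trivially because $G_0=G$. For the inductive step we apply Lemma~\ref{lem:single-step} to the graph $G_{i-1}$ with the set $C_i$. This gives ${\rm tw}(G_i)\le \max({\rm tw}(G_{i-1}), |N_{i-1}(C_i)|-1)$, where $N_{i-1}(C_i)$ denotes the neighborhood of $C_i$ in $G_{i-1}$. Two facts must be checked before this application is legitimate and yields the claimed bound:
\begin{itemize}[leftmargin=1.0cm]
\item[(a)] $C_i$ is a node subset of $G_{i-1}$ that induces a connected subgraph of $G_{i-1}$;
\item[(b)] $N_{i-1}(C_i)\subseteq N(C_i)$, where $N(C_i)$ is the neighborhood in the original graph $G$. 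Consequently $|N_{i-1}(C_i)|\le d_{\max}$.
\end{itemize}

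The key structural observation handles both facts. The components $C_1,\ldots,C_p$ are pairwise disjoint, and no edge of $G$ joins two distinct components, since they are distinct connected components of $G_S$. So $N(C_i)\cap S=\emptyset$ for every $i$, that is, $N(C_i)\subseteq V\setminus S$. Every edge added when passing from $G_{j-1}$ to $G_j$ joins two nodes of $N_{j-1}(C_j)$. I would therefore prove, inside the same induction, the auxiliary invariant that every edge of $G_{j}$ not present in $G$ has both endpoints in $V\setminus S$.

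Given this invariant, (a) and (b) follow. For (a): edges of $G_{i-1}$ incident to nodes of $C_i\subseteq S$ are original edges of $G$, and none of the nodes of $C_i$ has been removed, so $G_{i-1}[C_i]=G[C_i]$, which is connected. For (b): any neighbor $u$ of $C_i$ in $G_{i-1}$ is joined to $C_i$ by an original edge of $G$, so $u\in N(C_i)$. The invariant itself is maintained because the newly added edges lie inside $N_{i-1}(C_i)$, which by (b) is contained in $N(C_i)\subseteq V\setminus S$.

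Combining these, ${\rm tw}(G_i)\le\max({\rm tw}(G_{i-1}),d_{\max}-1)\le \max({\rm tw}(G),d_{\max}-1)$, and taking $i=p$ gives the claim. The main point requiring care is the auxiliary invariant. The rest is a direct application of Lemma~\ref{lem:single-step}, but without the invariant, added clique edges could in principle enlarge later neighborhoods or alter connectivity of later components. Disjointness of the $C_j$ together with the absence of edges between them rules this out.
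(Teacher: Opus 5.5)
Your proof is correct and follows the paper's route. The paper gets this lemma by repeatedly applying Lemma~\ref{lem:single-step}, and in its hypergraph analogue, Lemma~\ref{lem:component-elimination-itw-hypergraph}, it checks the same two facts you isolate: $C_i$ still induces a connected subgraph of $G_{i-1}$, and its neighborhood in $G_{i-1}$ is the original $N(C_i)$. Your auxiliary invariant (every added edge has both endpoints in $V\setminus S$) packages the paper's case-by-case contradiction arguments into one statement, and the containment $N_{i-1}(C_i)\subseteq N(C_i)$ you prove is all the bound requires.
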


We are now ready to prove Corollary~\ref{cor: quad}.

\begin{proof}[Proof of Corollary~\ref{cor: quad}]
Let
$V^-:=V\setminus V^+$,
and let $\bar G$ be the graph on $V^-$ obtained from the subgraph of $G$ induced by $V^-$ by making $N(C)$ a clique for every $C\in\C$, as in Theorem~\ref{th: quad}.
We show that assumptions~(i)--(ii) of the Corollary~\ref{cor: quad} imply assumptions~(i)--(ii) of Theorem~\ref{th: quad}.

Since both terms in assumption~(ii) of Corollary~\ref{cor: quad} are nonnegative, we have, for every $C\in\C$,
$|N(C)|\in O(\log |V|)$
and
$\kappa_C\log\left(1+\frac{|C|}{\kappa_C}\right)
\in O(\log |V|)$,
where the second expression is defined to be zero when $\kappa_C=0$. Thus, assumption~(ii) of Theorem~\ref{th: quad} is satisfied.
It remains to verify assumption~(i) of Theorem~\ref{th: quad}, namely that
${\rm tw}(\bar G)\in O(\log |V|)$.
Suppose first that $V^+=\emptyset$. Then $\C=\emptyset$ and $\bar G=G$. Hence, by assumption~(i) of Corollary~\ref{cor: quad} we have
${\rm tw}(\bar G)=
{\rm tw}(G)
\in O(\log |V|)$.
Now suppose that $V^+\neq\emptyset$. Let $C_1,\ldots,C_p$ be the connected components of the subgraph of $G$ induced by $V^+$. Since the $C_i$'s are the connected components of $G_{V^+}$, there are no edges of $G$ between two distinct components $C_i$ and $C_j$. Consequently,
$N(C_i)\subseteq V^-$ for every $i\in[p]$.
Applying Lemma~\ref{lemm:component} with $S=V^+$, the graph obtained by removing all components $C_1,\ldots,C_p$ and making each $N(C_i)$ a clique satisfies
$$
{\rm tw}(\bar G)
\leq
\max\{
{\rm tw}(G),
\max_{C\in\C}|N(C)|-1
\}.
$$
The graph produced by these component-removal and clique-addition operations is precisely $\bar G$: after all nodes of $V^+$ are removed, the remaining node set is $V^-$, and for each component $C\in\C$, all pairs of nodes in $N(C)$ have been made adjacent.
By assumption~(i) of the corollary,
${\rm tw}(G)\in O(\log |V|)$,
while, as observed above, assumption~(ii) implies
$\max_{C\in\C}|N(C)|\in O(\log |V|)$.
Therefore,
$
{\rm tw}(\bar G)
\leq
\max\{
O(\log |V|),
O(\log |V|)
\}
=
O(\log |V|).
$
Hence, assumption~(i) of Theorem~\ref{th: quad} is also satisfied.
Therefore, Problem~\eqref{pQP} can be solved in time polynomial in the input length $\L$.
\end{proof}

\section{Higher-degree polynomial optimization}
\label{sec: poly}

In this section, we consider a box-constrained polynomial optimization problem of degree $d \geq 3$. 
We obtain a sufficient condition under which this problem can be solved exactly in time polynomial in the input length in the unit-cost algebraic RAM model, which we formally define next.

\paragraph{The computational model.} Throughout this section, running times are measured in the \emph{unit-cost algebraic RAM model}~\cite{Tiwari92}. The machine has ordinary index registers and numerical registers. Numerical registers contain exact rational numbers. Each arithmetic operation $+,\; -,\; \times,\; \div$, each comparison between numerical registers, and each memory operation is assigned unit cost, independently of the bit lengths of the rational numbers involved. The input length $\L$ retains its usual bit-model meaning and denotes the number of bits required to encode the input polynomial. Thus, a running time polynomial in $\L$ in this section means that the number of unit-cost algebraic RAM operations is polynomial in $\L$; the bit lengths of intermediate rational numbers are not included in the running time. Moreover, real algebraic numbers are represented symbolically. A real algebraic number $\alpha$ is represented by a pair $(p,I)$, where $p$ is a univariate square-free polynomial with integer coefficients, and $I$ is a rational interval containing one real root of $p$, namely $\alpha$. Whenever real algebraic representations are manipulated below, we use explicit symbolic algorithms over $\mathbb Q$, and measure their running time by the number of underlying unit-cost rational arithmetic operations and comparisons.
A real algebraic vector is represented by a real univariate representation in the standard sense of~\cite{basu06}. Rational linear combinations of real algebraic numbers are stored formally, by keeping the algebraic numbers separately and recording their rational coefficients. An exact optimizer of Problem~\eqref{polyopt} is represented by its binary coordinates together with one real univariate representation for the continuous coordinates in each component. The optimal value is represented as a formal rational linear combination of the algebraic local optimal values.

As in Section~\ref{sec: quad}, we first identify a subset of variables that can be treated as binary variables because there is an optimal solution of Problem~\eqref{polyopt} at which these variables take binary values. As before, we refer to such variables as hidden binary variables.
For a vector $x\in\R^n$ and an index $i\in[n]$, we denote by
$x_{-i}:=(x_1,\dots,x_{i-1},x_{i+1},\dots,x_n)\in\R^{n-1}$
the vector obtained from $x$ by removing its $i$-th coordinate. Accordingly, for $t\in\R$ we write
$(x_{-i},t):=(x_1,\dots,x_{i-1},t,x_{i+1},\dots,x_n)\in\R^n$.
The \emph{standard monomial basis} of the space of polynomials in $n$ variables of degree at most $d$ is the set $\{x^\alpha : \alpha \in \mathbb{Z}_{\geq 0}^n,\, |\alpha|\le d\}$; a polynomial is said to be written in this basis if it is expressed as a linear combination of these monomials.
The following lemma provides easily verifiable sufficient conditions for identifying hidden binary variables:

\begin{lemma}\label{cor:simple-cases}
Let $f(x)$, $x \in \R^n$ be a polynomial and fix some $i\in[n]$.
Then we have the following:
\medskip
\begin{enumerate}
\item Write $f(x)$ as a polynomial in $x_i$:
$$
f(x)=\sum_{m=0}^d a_m(x_{-i})\,x_i^m,
$$
where each $a_m(x_{-i})$ is a polynomial in the remaining variables $x_{-i}$.
Assume that for every $m\ge 2$, the polynomial $a_m(x_{-i})$ has only nonpositive coefficients when written in the standard monomial basis.
Then there exists a minimizer $x^*$ of $f(x)$ over $[0,1]^n$ such that
$x_i^*\in\{0,1\}$.

\item Define
$$
\Delta_i(x):=f(x)-(1-x_i)f(x_{-i},0)-x_i f(x_{-i},1),
$$
and write
$\Delta_i(x)=x_i(1-x_i)H_i(x)$.
Assume that $H_i(x)$ has only nonnegative coefficients when written in the standard monomial basis.
Then there exists a minimizer $x^*$ of $f(x)$ over $[0,1]^n$ such that $x_i^*\in\{0,1\}$.
\end{enumerate}
\end{lemma}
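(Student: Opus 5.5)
Both parts follow the template of Lemma~\ref{lem:binary}. Fix a minimizer $x^*$ of $f$ over $[0,1]^n$; one exists by compactness. Consider the univariate restriction $g(t):=f(x^*_{-i},t)$ for $t\in[0,1]$. In each case we show that $g$ is concave on $[0,1]$, or more directly that $g(t)\ge\min\{g(0),g(1)\}$. Then one endpoint $t_0\in\{0,1\}$ satisfies $g(t_0)\le g(x_i^*)$. By optimality of $x^*$ equality holds, so $(x^*_{-i},t_0)$ is again a minimizer with the $i$-th coordinate binary. Only the single index $i$ is involved, so no iteration over coordinates is needed.

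\emph{Part 1.} Compute $g''(t)=\sum_{m\ge2}m(m-1)a_m(x^*_{-i})\,t^{m-2}$. Since $x^*_{-i}\in[0,1]^{n-1}$, every monomial in the remaining variables is nonnegative there. A polynomial with only nonpositive coefficients in the standard monomial basis is therefore nonpositive on the box, so $a_m(x^*_{-i})\le 0$ for all $m\ge 2$. Also $t^{m-2}\ge0$ for $t\in[0,1]$, so $g''\le 0$ on $[0,1]$. Hence $g$ is concave, and $g(t)\ge(1-t)g(0)+tg(1)\ge\min\{g(0),g(1)\}$.

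\emph{Part 2.} By definition,
$$g(t)=(1-t)g(0)+t\,g(1)+t(1-t)H_i(x^*_{-i},t).$$
The remainder $\Delta_i$ vanishes at $x_i=0$ and $x_i=1$, so it is divisible by $x_i(1-x_i)$ as a polynomial in $x_i$ over the ring of polynomials in $x_{-i}$. The factorization is therefore well defined and $H_i$ is a polynomial. If $H_i$ has only nonnegative coefficients, then $H_i\ge0$ on $[0,1]^n$, since all monomials are nonnegative there. At first sight this gives the wrong inequality: $g(t)\ge(1-t)g(0)+tg(1)$, a lower bound rather than an upper bound by the chord. This is exactly what is needed, though. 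Combining it with $(1-t)g(0)+tg(1)\ge\min\{g(0),g(1)\}$ gives $g(x_i^*)\ge\min\{g(0),g(1)\}$, so we can move to the better endpoint without increasing the objective. As in Part~1, optimality of $x^*$ forces equality, and that endpoint gives a minimizer with $x_i^*\in\{0,1\}$.

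The main obstacle is getting the sign convention in Part~2 right. A nonnegative $H_i$ means $f$ lies above its linear interpolation in $x_i$, which at first looks like convexity. The argument only needs $g\ge$ chord $\ge\min$ of the endpoints; concavity is not required. The other point to check is that the divisibility by $x_i(1-x_i)$ holds, which follows from $\Delta_i$ vanishing at $x_i\in\{0,1\}$ identically in $x_{-i}$.
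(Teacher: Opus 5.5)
Your proof is correct and follows the paper's argument. For part~1 you use concavity of the slice in $x_i$. For part~2 you use that $H_i\ge 0$ on the box forces the slice to lie above its chord, hence above the smaller endpoint value. Your divisibility check for $\Delta_i$ and the explicit chord-to-minimum step spell out details the paper leaves implicit.

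One small correction: lying above the chord is the behavior of a concave function (convex functions lie below their chords), so your aside that it ``at first looks like convexity'' is backwards. This does not affect the argument.
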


\begin{proof}
First consider part~(1). Fix $x_{-i}\in[0,1]^{n-1}$ and consider
$$
\phi(t)=f(x_1,\dots,x_{i-1},t,x_{i+1},\dots,x_n)
=\sum_{m=0}^d a_m(x_{-i})\,t^m.
$$
Since each monomial in the variables $x_{-i}$ is nonnegative on $[0,1]^{n-1}$, and since each coefficient of $a_m(x_{-i})$ is nonpositive for every $m\ge 2$, it follows that $\phi(t)$ is concave on $[0,1]$. Therefore, there exists a minimizer of $f(x)$, $x \in [0,1]^n$ at $x^*_i \in \{0,1\}$.

Next, consider part~(2). Since every monomial is nonnegative on $[0,1]^n$ and $H_i$ has only nonnegative coefficients in the standard monomial basis, we have
$H_i(x)\ge 0$ for all  $x\in[0,1]^n$. Therefore $\Delta_i(x) \geq 0$ for all  $x\in[0,1]^n$.
Thus, the slice of $f(x)$ in $x_i$ lies above its chord for every fixing of the other variables, implying that there exists a minimizer $x^*$ of $f(x)$ with $x^*_i \in \{0,1\}$. 
\end{proof}

\begin{remark}\label{ex:factorization-more-general}
In Lemma~\ref{cor:simple-cases}, while condition~1 is simpler to verify,  condition~2 is strictly more general. To see this, consider $f(x_1,x_2)=x_1(1-x_1)(x_1+x_2)$.
Expanding in powers of $x_1$, we obtain
$f(x_1,x_2)= -x_1^3 +(1-x_2)x_1^2 + x_1x_2$.
The coefficient of $x_1^2$ is the polynomial $1-x_2$, which has a positive coefficient in the standard monomial basis. Therefore, condition~1 of Lemma~\ref{cor:simple-cases} does not apply.
On the other hand,
$f(0,x_2)=f(1,x_2)=0$, implying that
$\Delta_1(x_1,x_2)=f(x_1,x_2)=x_1(1-x_1)H_1(x_1,x_2)$,
with $H_1(x_1,x_2)=x_1+x_2$.
Since $H_1$ has only nonnegative coefficients in the standard monomial basis, condition~2 of Lemma~\ref{cor:simple-cases} applies. Hence there exists a minimizer of $f$ with $x_1\in\{0,1\}$.
For $d=2$, conditions~(1) and~(2) of Lemma~\ref{cor:simple-cases} are equivalent and both reduce to the condition of Lemma~\ref{lem:binary}; \ie $q_{ii}\le 0$. To see this consider $f(x)=x^\top Q x + c^\top x$. Viewing $f$ as a polynomial in $x_i$, we obtain
$$
f(x)=q_{ii}x_i^2+\Big(2\sum_{j\neq i}q_{ij}x_j+c_i\Big)x_i+\text{terms independent of }x_i.
$$
Thus $a_2(x_{-i})=q_{ii}$, and condition~(1) is equivalent to $q_{ii}\le 0$.
Moreover, a direct computation gives
$\Delta_i(x)=f(x)-(1-x_i)f(x_{-i},0)-x_i f(x_{-i},1)
=-q_{ii}\,x_i(1-x_i)$.
Hence in condition~(2) we have $H_i(x)=-q_{ii}$, and requiring $H_i$ to have nonnegative coefficients is again equivalent to $q_{ii}\le 0$.
\end{remark}

We should remark that a more general sufficient condition than those Lemma~\ref{cor:simple-cases} 
is $H_i(x)\geq 0$ for all $x\in[0,1]^n$. 
However, this condition cannot be verified in polynomial time, in general.

Consider the polynomial $f(x)=\sum_{|\alpha| \leq d}{c_{\alpha} x^{\alpha}}$, $x \in \R^n$. We associate with $f$ the \emph{interaction hypergraph} $G=(V,E)$ with $V=[n]$ and for each monomial $c_\alpha x^\alpha$ with $c_\alpha \neq 0$ and $|\supp(\alpha)| \geq 2$, we include the edge $\supp(\alpha):=\{i \in V : \alpha_i \neq 0\}$ in $E$. Let $G=(V,E)$ be a hypergraph and let $C \subseteq V$. We define the \emph{subhypergraph} of $G$ induced by $C$ as $G_{C} = (C, E_{C})$ where $E_{C} := \{e \cap C : e \in E,\ |e \cap C| \ge 2 \}$. A hypergraph $G=(V,E)$ is \emph{connected} if for every $u,v\in V$ there exists a sequence of edges $e_1,\dots,e_k\in E$
such that $u\in e_1$, $v\in e_k$, $e_j \cap e_{j+1} \neq \emptyset$ for all $j \in [k-1]$. The \emph{connected components} of a hypergraph are the maximal subsets of nodes that induce connected subhypergraphs.
Recall that the incidence graph $I(G)$ of a hypergraph $G$ is the bipartite graph with node set $V\cup E$,
in which $v\in V$ is adjacent to $e\in E$ if and only if $v\in e$.
We then define the \emph{incidence treewidth} of $G$, denoted by
$\itw(G)$, as the treewidth of $I(G)$. 
\medskip

We are now ready to state the main result of this section.

\begin{theorem}\label{th: polyopt}
Consider Problem~\eqref{polyopt} with the interaction hypergraph $G=(V,E)$. Let $V^-$ be the set of hidden binary variables as defined by Lemma~\ref{cor:simple-cases}, and let $V^+= V \setminus V^-$. Denote by $G_{V^+}$ the subhypergraph of $G$ induced by $V^+$ and denote by $\C$ the set of connected components of $G_{V^+}$. For each component $C \in \C$, define its neighborhood:
$$
N(C) := \big\{ u \in V \setminus C : \exists\, e \in E \text{ such that } e \cap C \neq \emptyset,\ u \in e \big\}.
$$
Suppose that the following conditions are satisfied:
\medskip
\begin{itemize}[leftmargin=1.0cm]
\item [(i)] $\itw(G)\in O(\log |V|)$;
\item [(ii)] $|C|\in O(1)$ for all $C\in \C$;
\item [(iii)] $|N(C)|\in O(\log |V|)$ for all $C\in \C$.
\end{itemize}
\medskip

Then Problem~\eqref{polyopt} can be solved exactly in time polynomial in the input length $\L$ in the unit-cost algebraic RAM model. The algorithm returns an exact symbolic representation of an optimal solution and of the optimal value.
\end{theorem}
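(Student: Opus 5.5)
We follow the template of the proof of Theorem~\ref{th: quad}, replacing the quadratic face-enumeration step by fixed-dimensional real algebraic optimization, and replacing the primal-treewidth dynamic program by an incidence-treewidth one.

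\emph{Step 1: restrict $V^-$ to binary values.} Lemma~\ref{cor:simple-cases} is applied coordinatewise, as in Lemma~\ref{lem:binary}. Fixing one coordinate at an endpoint does not affect the hypothesis for the others. Indeed, the coefficient conditions are stated for the polynomial $f$ itself, and the slice argument holds for every fixing of the remaining variables in $[0,1]$. Hence there is a minimizer with $x_{V^-}\in\{0,1\}^{V^-}$.

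\emph{Step 2: decompose $f$.} Every monomial with nonempty support in $V^+$ has its support inside $C\cup N(C)$ for exactly one component $C\in\C$. This is because two components cannot share an edge meeting both of them, by maximality of connected components of $G_{V^+}$. We may therefore write
\[
f=f_{V^-}(x_{V^-})+\sum_{C\in\C} f_C(x_C,x_{N(C)}),
\]
and obtain the analogue of~\eqref{e1}. For $z\in\{0,1\}^{N(C)}$ we define $\psi_C(z):=\min_{x_C\in[0,1]^C} f_C(x_C,z)$. Each such problem is a polynomial optimization problem in $|C|=O(1)$ variables. Its degree is at most $d$ and its coefficients are rational, each being a sum of coefficients of $f$. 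By~\cite{renegar92}, or the critical-point and sign-determination algorithms of~\cite{basu06}, it can be solved exactly in time $(\text{size}\cdot d)^{O(|C|)}=\poly(\L)$ arithmetic operations. The output is the optimal value as a real algebraic number $(p,I)$ together with a real univariate representation of a minimizer. By assumption~(iii) there are $2^{|N(C)|}=|V|^{O(1)}$ choices of $z$, and $|\C|\le|V|$, so all values $\psi_C$ are obtained in polynomial time.

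\emph{Step 3: solve the reduced binary problem.} The reduced problem is $\min_{x_{V^-}} f_{V^-}(x_{V^-})+\sum_C\psi_C(x_{N(C)})$. We do not expand $\psi_C$ multilinearly, since its values are algebraic. Instead we run dynamic programming directly over a tree decomposition, treating each $\psi_C$ as a table-valued factor on $N(C)$. The structure to control is the hypergraph $H$ on $V^-$ whose edges are the edges of $G$ contained in $V^-$ together with the sets $N(C)$. The incidence graph of $H$ is obtained from $I(G)$ by contracting, for each $C$, the connected set consisting of $C$ and its incident edge-nodes into one new edge-node $N(C)$; this set is connected in $I(G)$ because $C$ is connected. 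Deleting the remaining vertices is harmless. Since treewidth is monotone under minors, $\itw(H)\le\itw(G)=O(\log|V|)$. A tree decomposition of width $O(\log|V|)$ can be computed by~\cite{BodEtAl16}.

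The dynamic program of~\cite{CapDelDiG23} for bounded incidence treewidth touches each factor through tables indexed by assignments to bag variables. Factor nodes have tables of size $2^{|N(C)|}=|V|^{O(1)}$, and bags have size $2^{O(\log|V|)}$, so the total work is polynomial. The only arithmetic it needs is addition of table entries and comparison (min). We keep every partial value as a formal rational linear combination of the algebraic numbers $\psi_C(z)$. Comparing two such combinations is a sign determination of a sum of polynomially many algebraic numbers. The optimizer is then recovered by backtracking and by taking the stored local minimizers $x_C^*$.

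\emph{Main obstacle.} The hard part is this comparison step: deciding the sign of a rational combination $\sum_k r_k\alpha_k$ of up to $\poly(|V|)$ distinct algebraic numbers in polynomially many operations. The combination depends on many algebraic numbers, so fixed-dimensional methods do not directly apply. My first attempt is to exploit that each DP comparison only introduces a bounded number of new algebraic summands over the two children. I would then maintain each partial value as a single algebraic number $(p,I)$, computing the sum of two algebraic numbers via a resultant and refining the isolating interval. The risk is that degrees grow multiplicatively along the tree. If that blows up, the fallback is to cite an exact sign-determination routine for such expressions in the unit-cost model, as the abstract suggests the paper does. That routine would be applied after bounding the number of distinct algebraic terms in each comparison.
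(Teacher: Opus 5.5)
\textbf{Verdict: genuine gap.} Your Steps~1--2 match the paper; your coordinatewise justification of Step~1 is in fact more careful. Your contraction argument giving $\itw(H)\le\itw(G)$ is also correct. The gap is the claim that a dynamic program over an incidence tree decomposition handles each $\psi_C$ as a table factor in polynomial time.

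\emph{Why the DP bound fails.} The algorithm of~\cite{CapDelDiG23} achieves $2^{O(\itw)}$ only because every term is a monomial. An edge node in a bag therefore carries a single bit: whether all of its already forgotten vertices equal $1$. An arbitrary table $\psi_C$ admits no such compression. Its factor node must in general carry the values of the already forgotten vertices of $N(C)$, which is up to $2^{|N(C)|}$ states. These state sets multiply over all factor nodes in a bag, and your count ($2^{|N(C)|}$ per factor, $2^{O(\log|V|)}$ per bag) ignores that product.

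\emph{A concrete family.} Take hidden binary variables $z_{ab}$, $a,b\in[k]$, and continuous singletons $x_a,y_b$ with terms $x_a^2-\ell_{ab}x_az_{ab}$ and $y_b^2-m_{ab}y_bz_{ab}$, where $\ell_{ab},m_{ab}>0$. Pad with $2^k$ isolated variables $w_j$ carrying the terms $-w_j^3$. Hypotheses (i)--(iii) then hold with $\itw(G)=|N(C)|=k\approx\log_2|V|$. Your $I(H)$ is $K_{k,k}$ on the row and column factor nodes, with each edge subdivided by some $z_{ab}$. A balanced-separator argument shows that every tree decomposition of width $o(k^2)$ has a bag containing $\Omega(k)$ factor nodes. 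Each of these may need $2^{\Theta(k)}$ states, so your DP gives nothing better than $|V|^{O(\log|V|)}$ on this family.

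\emph{The paper's route.} The paper avoids table factors. It expands each $\psi_C$ by M\"obius inversion into monomials on subsets of $N(C)$, storing the coefficients as formal integer combinations of the algebraic values, so Proposition~\ref{binaryitw} applies verbatim. The price is a complete hypergraph on each $N(C)$. The paper bounds the resulting incidence treewidth by $\max(\itw(G),\max_C|N(C)|)$ through Lemma~\ref{lem:component-elimination-itw-hypergraph}.

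You cannot simply import that step, because this bound is false in general:
\begin{itemize}
\item Lemma~\ref{lem:single-step}, on which it rests, already fails for $G=K_{2,3}$ with $C$ one vertex of the $2$-side. There $G'=K_4$, whose treewidth is $3>\max(2,3-1)$.
\item In the family above, assume $\sum_b\ell_{ab}\le 2$ and $\sum_a m_{ab}\le 2$. Then $\psi_{\{x_a\}}(z)=-\tfrac14\bigl(\sum_b\ell_{ab}z_{ab}\bigr)^2$, and similarly for the columns.
\item The expansion of these functions contains every pair within each row and each column. The reduced hypergraph therefore has incidence treewidth at least ${\rm tw}(K_k\square K_k)=\Theta(k^2)=\Theta(\log^2|V|)$.
\end{itemize}
So neither route, as written, yields width $O(\log|V|)$. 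Closing the gap seems to require either a stronger hypothesis or a new idea. One such hypothesis is $\itw(\bar G)\in O(\log|V|)$ for the hypergraph $\bar G$ with complete hypergraphs on the $N(C)$, mirroring assumption~(i) of Theorem~\ref{th: quad}.

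\emph{The comparison step.} Separately, you leave this step open, and your resultant idea would fail: a sum of $M$ algebraic numbers can have degree exponential in $M$. The paper instead keeps formal rational combinations and decides signs using the separation bound of~\cite{BurFunMehSch09} together with the quadratically convergent refinement of~\cite{KerSag15}. In the unit-cost model, the number of refinement iterations grows only logarithmically in the required number of bits, which may be exponential.
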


We note that the structural quantities appearing in the assumptions of Theorem~\ref{th: polyopt} can be computed in time polynomial in $\L$. Indeed, $G=(V,E)$ can be constructed in polynomial time from the monomial representation of the polynomial. 
The set $V^-$ can be identified by checking the
sufficient conditions of Lemma~\ref{cor:simple-cases} for each variable. Condition~(1) can be checked directly from the coefficients of the input polynomial. For condition~(2), writing
$f(x)=\sum_{m=0}^d a_m(x_{-i})x_i^m$,
we have
$H_i(x)=-\sum_{m=2}^d a_m(x_{-i})\left(1+x_i+\cdots+x_i^{m-2}\right)$.
Thus, each input monomial generates at most $d-1$ monomials in the explicit representation of $H_i$. Since $d$ is polynomially bounded in $\L$, $H_i$ can be constructed explicitly and its coefficient signs can
be checked in time polynomial in $\L$.
The connected components of $G_{V^+}$, as well as $N(C)$ for each component $C$, can be computed by performing a graph traversal on the incidence graph of $G$ in time linear in the size of the incidence representation, namely $O\!\left(|V|+|E|+\sum_{e\in E}|e|\right)$. Once these sets are obtained, verifying that $|C|\in O(1)$ and $|N(C)|\in O(\log |V|)$ is immediate. Finally, although computing treewidth exactly is $\NP$-hard, there exists an algorithm that, given a graph with $N$ nodes and an integer $k$, runs in time $2^{O(k)}N$ and either certifies that the treewidth is greater than $k$ or constructs a tree decomposition of width $O(k)$~\cite{BodEtAl16}.

To prove Theorem~\ref{th: polyopt} we first present a number of intermediate results that will be used in the proof. The first result serves as the strongest sufficient condition for polynomial-time solvability of binary polynomial optimization.

\begin{proposition}[\cite{CapDelDiG23}]\label{binaryitw}
There is a strongly polynomial time algorithm to solve Problem~\eqref{prob BPO} if the interaction hypergraph $G$ is $\beta$-acyclic or if $\itw(G) \in O(\log\poly(|V|,|E|))$. Moreover, an optimal solution of Problem~\eqref{prob BPO} can be found using $ 2^{O(\itw(G))}\poly(|V|,|E|)$ arithmetic operations and comparisons involving the objective function coefficients.   
\end{proposition}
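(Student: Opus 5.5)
The plan is to treat the two sufficient conditions separately: dynamic programming over a tree decomposition of the incidence graph $I(G)$ for the treewidth bound, and a variable-elimination scheme for $\beta$-acyclicity. In both, the only operations on the costs $c$ are additions, subtractions and comparisons of partial sums, which gives the claimed operation count.

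\emph{Incidence treewidth.} Let $k:=\itw(G)$. I would compute a tree decomposition of $I(G)$ of width $O(k)$ using~\cite{BodEtAl16}, and convert it into a nice tree decomposition with introduce, forget and join nodes. Its number of nodes is $O(k(|V|+|E|))$. A bag $X_t$ contains variable nodes $i\in V$ and edge nodes $e\in E$.

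A DP state at $t$ consists of two parts:
\begin{itemize}[leftmargin=1.0cm]
\item an assignment $z_i\in\{0,1\}$ for each variable node $i\in X_t$;
\item for each edge node $e\in X_t$, a bit $b_e$, which is $1$ exactly when every variable of $e$ already processed in the subtree of $t$ is set to $1$.
\end{itemize}
The value of a state is the minimum cost of the terms already paid for. A term $c_iz_i$ is paid when $i$ is forgotten. A term $c_e\prod_{i\in e}z_i$ is paid when $e$ is forgotten, and it equals $c_eb_e$ at that moment.

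This is correct because each incidence edge $\{i,e\}$ lies in some bag. So before $e$ is forgotten, every $i\in e$ has been in a common bag with $e$, and $b_e$ has been updated by $b_e\leftarrow b_e\wedge z_i$ at that point.

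The transitions are as follows.
\begin{itemize}[leftmargin=1.0cm]
\item Introduce nodes extend the state.
\item Forget nodes minimize over the forgotten coordinate and add its cost.
\item Join nodes combine the two children: variable nodes must agree, the bits are combined as $b_e=b'_e\wedge b''_e$, and the two values are added. To avoid double counting, each cost is charged only at the unique forget node of its element.
\end{itemize}
Each bag has $2^{O(k)}$ states, so the total work is $2^{O(\itw(G))}\poly(|V|,|E|)$ operations. This is polynomial when $\itw(G)\in O(\log\poly(|V|,|E|))$. An optimal $z$ is then recovered by backtracking.

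\emph{$\beta$-acyclic case.} Here I would use the characterization that a $\beta$-acyclic hypergraph has a nest point. This is a node $v$ whose incident edges form a chain $e_1\subset e_2\subset\cdots\subset e_m$.

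Minimizing over $z_v$ gives the term
\[
\min\{0,\;c_v+\textstyle\sum_j c_{e_j}\prod_{i\in e_j\setminus\{v\}}z_i\}.
\]
Because the sets are nested, the inner products form a decreasing chain of indicators. The minimum is therefore a function of the index $r$ of the largest $j$ with $\prod_{i\in e_j\setminus\{v\}}z_i=1$ (and a fixed value when no such $j$ exists). It can thus be rewritten as a linear combination of the $m$ monomials $\prod_{i\in e_j\setminus\{v\}}z_i$, with coefficients obtained from prefix sums and a $\min$ with $0$.

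The new edges $e_j\setminus\{v\}$ are again nested, and the resulting hypergraph is a subhypergraph of the trace of $G$ on $V\setminus\{v\}$. Hence it stays $\beta$-acyclic and has no more edges than $G$. Iterating over a nest-point elimination order, and recording the optimal choices for back-substitution, gives a strongly polynomial algorithm.

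The step I expect to be the main obstacle is this $\beta$-acyclic elimination. One must verify carefully that rewriting the $\min$ term produces only edges of the form $e\setminus\{v\}$, so that the number of edges never grows and $\beta$-acyclicity is preserved at every step. By comparison, the treewidth DP is routine once the ``all ones so far'' bit per edge is in place.
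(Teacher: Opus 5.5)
Your proof is correct, but it does not follow the paper. The paper proves nothing here: the proposition is imported as a black box from \cite{CapDelDiG23}, with the $\beta$-acyclic case going back to \cite{dPDiG23ALG}. You instead give a self-contained argument with two separate mechanisms.

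In the incidence-treewidth case, the key choice is the single ``all ones so far'' bit per edge node. Because $\wedge$ is idempotent, it is harmless if an incidence $\{i,e\}$ is processed in several bags or in both branches of a join. Because every bag containing $e$ lies below the forget node of $e$, the bit equals $\prod_{i\in e}z_i$ at the moment $c_e$ is charged.

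In the $\beta$-acyclic case, the chain $e_1\subset\cdots\subset e_m$ makes $y_j:=\prod_{i\in e_j\setminus\{v\}}z_i$ nonincreasing in $j$. With $h(r):=\min\{0,c_v+\sum_{j\le r}c_{e_j}\}$, this gives exactly $\min_{z_v}(\cdot)=h(0)+\sum_{j}(h(j)-h(j-1))y_j$. The new edges are the traces $e_j\setminus\{v\}$; they are distinct, and may be singletons or merge with existing edges. $\beta$-acyclicity survives because a $\beta$-cycle of $G-v$ lifts to one of $G$, a standard fact you invoke only implicitly via the ``trace'' remark.

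The citation buys the paper brevity and both tractable classes at once. Your route makes transparent the feature the paper later relies on in the proof of Theorem~\ref{th: polyopt}. In your DP the coefficients enter only through sums of subsets of the $c$'s and comparisons of such sums. That is what justifies running the binary solver on formal linear combinations of the algebraic values $\alpha_j$, with one exact sign test per comparison. Only your treewidth half is needed there, since the clique-adding step introduces $\beta$-cycles, as the paper itself remarks.
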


Now let us turn our attention to a continuous polynomial optimization problem. The problem of minimizing a polynomial over a compact semialgebraic set can be reduced to deciding the feasibility of semialgebraic systems. In particular, deciding whether there exists
$x \in [0,1]^k$ such that $f(x) \le \lambda$
is an instance of the existential theory of the reals.
It is a classical result that such problems can be solved in time polynomial in the encoding length of the input when the number of variables is fixed; see Renegar~\cite{renegar92} and Basu--Pollack--Roy~\cite{basu06}. More precisely, from Algorithm~14.1, Algorithm~14.9, and Theorem~13.11 of~\cite{basu06}, we obtain the following result:

\begin{proposition}[\cite{basu06}]
\label{cor:log-dim-poly-opt}
Given a polynomial $f(x)$, $x\in\R^n$, of degree at most $d$ and with rational coefficients, the problem of minimizing $f$ over $[0,1]^n$ can be solved exactly using $(nd)^{O(n)}$ arithmetic operations over
$\mathbb Q$. The algorithm returns an exact real algebraic representation of the optimal value together with a real univariate representation of an optimal solution. For fixed $n$, the degrees and encoding lengths of these representations are polynomially bounded in $d$ and $\L$.
\end{proposition}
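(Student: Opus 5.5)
The plan is to derive this statement directly from the algorithmic machinery of~\cite{basu06} rather than reprove it from scratch. First reduce optimization to a one-parameter quantifier elimination problem. Since $[0,1]^n$ is compact, the minimum value $\lambda^*$ of $f$ over the box is attained. It is characterized as the left endpoint of the semialgebraic set
$$\Lambda:=\{\lambda\in\R : \exists x\ (x_i\ge 0,\ 1-x_i\ge 0\ \forall i\in[n],\ f(x)-\lambda\le 0)\},$$
and this set equals $[\lambda^*,\infty)$. The formula defining $\Lambda$ has one existential block of $n$ variables and one free variable $\lambda$. It uses $s=2n+1$ polynomials of degree at most $d$.

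Next, eliminate the block with the quantifier elimination (or global optimization) procedure underlying Algorithm~14.9 of~\cite{basu06}. This produces a finite family of univariate polynomials in $\lambda$, together with a quantifier-free description of $\Lambda$ by sign conditions on them. The complexity bound for one block of $n$ variables and one free variable is of the form $s^{O(n)}d^{O(n)}$. With $s=2n+1$ this gives $(nd)^{O(n)}$ arithmetic operations over $\Q$, and the output polynomials have degree $d^{O(n)}$. The endpoint $\lambda^*$ is a real root of one of these polynomials. I would isolate it by comparing the finitely many real roots, encoded by Thom encodings (Algorithm~14.1 / sign determination). Then I would select the smallest root at which the quantifier-free description of $\Lambda$ switches to true. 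The result is an exact representation $(p,I)$ of $\lambda^*$ with $\deg p=d^{O(n)}$, which is polynomial in $d$ for fixed $n$.

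To recover an optimizer, I would treat $\lambda$ as an extra variable and consider the real algebraic set in $n+1$ variables
$$\{(x,\lambda): p(\lambda)=0,\ \lambda \text{ has the Thom encoding of }\lambda^*,\ f(x)-\lambda=0,\ 0\le x_i\le 1\}.$$
This set is nonempty. I would apply the sampling algorithm for realizable sign conditions (Theorem~13.11 of~\cite{basu06}) to the family $\{p,f-\lambda,x_i,1-x_i\}$. It returns, in $(nd)^{O(n)}$ operations, a point in every realizable sign condition, each given by a real univariate representation of degree $d^{O(n)}$. Among these, I would keep a sample point whose sign vector matches the required conditions. Its $x$-part is a minimizer, already in real univariate representation.

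The main obstacle I expect is the bookkeeping in the last two steps. One must certify exactly which root of the elimination polynomials is $\lambda^*$, and correctly pass this algebraic number into the sampling step without leaving exact arithmetic over $\Q$. Adding $\lambda$ as a variable constrained by its defining polynomial and Thom encoding keeps everything rational, at the cost of one extra dimension. That cost does not change the $(nd)^{O(n)}$ bound. It also keeps the degrees and bit sizes polynomial in $d$ and $\L$ when $n$ is fixed, which gives the final claim.
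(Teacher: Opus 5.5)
Your computation of $\lambda^*$ is fine. Eliminating the block $x$ from $\{f-\lambda,\,x_i,\,1-x_i\}$ and taking the smallest root of the elimination polynomials at which the quantifier-free description of $\Lambda$ holds is essentially the global-optimization procedure the paper cites. The gap is in recovering the minimizer.

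\textbf{Complexity of the lifting step.} Promoting $\lambda$ to a variable does not merely add one dimension. It puts $p$, of degree $D=d^{O(n)}$, into the family handed to the sampling algorithm. The generic bound $s^{k+1}D^{O(k)}$ with $k=n+1$ then gives $D^{O(n)}=d^{O(n^2)}$ operations, and output degrees of the same order. So your claim that this step ``does not change the $(nd)^{O(n)}$ bound'' is false: the extra dimension is harmless, but the extra degree is not. Only the fixed-$n$ conclusion survives, which is admittedly all the paper later uses.

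\textbf{Selecting the right sample point.} A sign vector on the family you list, $\{p,\,f-\lambda,\,x_i,\,1-x_i\}$, only certifies that the $\lambda$-coordinate is \emph{some} root $r$ of $p$ with $r\in f([0,1]^n)$. Such a root may satisfy $r>\lambda^*$. A guarantee of one point per realizable sign condition can therefore hand you a point over the wrong root. You need one of two things:
\begin{enumerate}
\item put $p',\dots,p^{(D-1)}$ into the family, so that the Thom encoding becomes part of the sign condition; this also pushes $s$ up to $d^{O(n)}$;
\item use the stronger guarantee that the sample points meet every semi-algebraically connected component (each lies in a single fiber $\lambda=r$), and then compare each sample's $\lambda$-coordinate with $\lambda^*$ explicitly.
\end{enumerate}

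\textbf{How to fix it.} To actually reach $(nd)^{O(n)}$, do not re-sample with $p$ at all. Block Elimination (Algorithm~14.1), run with parameter $\lambda$, returns more than the elimination polynomials in $\lambda$. It also returns parametrized univariate representations whose specializations give sample points for the realizable sign conditions of each fiber. The global-optimization algorithm (Algorithm~14.9) obtains a minimizer by specializing these at the Thom encoding of $\lambda^*$ and checking the box constraints by sign determination. All of this stays within $s^{O(n)}d^{O(n)}=(nd)^{O(n)}$, which is why the paper cites Algorithm~14.1 and Theorem~13.11 together with Algorithm~14.9.

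An equivalent repair closer to your plan: run the $n$-variable sampling algorithm on $\{f-\lambda^*,\,x_i,\,1-x_i\}$ over the ordered field $\Q(\lambda^*)$. Do arithmetic modulo $p$ and decide signs from the Thom encoding of $\lambda^*$. That costs $(nd)^{O(n)}$ field operations, each costing $d^{O(n)}$ rational operations, for a total of $(nd)^{O(n)}$.
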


Recall that throughout this paper we assume that $d$ is polynomially bounded in $\L$. Therefore, for fixed $n$,
Proposition~\ref{cor:log-dim-poly-opt} implies that
Problem~\eqref{polyopt} can be solved using a number of unit-cost algebraic RAM operations polynomial in $\L$.
We should remark that even in fixed dimension, the optimal value of a polynomial optimization problem over $[0,1]^n$ may not be rational for $d \geq 3$. In general, optimal solutions and optimal values are algebraic numbers, \ie roots of polynomials with rational coefficients. 
An exact representation of a real algebraic optimal value consists of a square-free defining polynomial together with a rational isolating interval containing the root.

Finally, to prove Theorem~\ref{th: polyopt} we need to show that the incidence treewidth of the interaction hypergraph remains bounded even if we add certain complete hypergraphs with small node sets to it. The next two lemmas establish this fact. To state these results, we first need to introduce some notation and terminology.
Given a graph $G = (V, E)$, a \emph{tree decomposition} of $G$ is a pair $(\X, T)$, where $\X = \{X_1, \cdots, X_m\}$ is a family of subsets of $V$, called \emph{bags}, and $T$ is a tree with $m$ nodes, where each node of $T$ corresponds to a bag such that:

\medskip

\begin{enumerate}
    \item $V = \bigcup_{i \in [m]}{X_i}$.
    \item For every edge $\{v_j, v_k\} \in E$, there is a bag $X_i$ for some $i \in [m]$ such that $X_i \ni v_j, v_k$. 
    \item For each node $v \in V$, the set of all bags containing $v$ induces a connected subtree of $T$.
\end{enumerate}
\medskip
The \emph{width} $\omega(\X)$ of a tree decomposition $(\X,T)$ is the size of its largest bag $X_i$ minus one. The \emph{treewidth} ${\rm tw}(G)$ of a graph $G$ is the minimum width among all possible tree decompositions of $G$.
Let $G=(V,E)$ be a hypergraph and let $C\subseteq V$. We define the hypergraph obtained from $G$ by \emph{removing} $C$ as
$G-C := (V\setminus C, E')$, where  $E':=\{e \setminus C: e \in E, \; |e \setminus C| \geq 2\}$.

\begin{lemma}\label{lem:single-step-itw-hypergraph}
Let $G=(V,E)$ be a hypergraph, and let $C\subseteq V$ be such that the subhypergraph of $G$ induced by $C$ is connected.
Define the neighborhood of $C$ as  
\begin{equation}\label{hyphood}
N(C):=\{u\in V\setminus C : \exists e\in E \text{ with } u\in e \text{ and } e\cap C\neq\emptyset \}.
\end{equation}
Let $G'$ be the hypergraph obtained from $G$ by removing $C$ and subsequently adding the complete hypergraph on $N(C)$, \ie adding as edges all subsets $f\subseteq N(C)$ with $|f|\ge 2$.
Then we have
$$
\itw(G')\le \max\bigl(\itw(G),\ |N(C)|\bigr).
$$
\end{lemma}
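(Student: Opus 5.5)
The plan is to reduce the statement to the graph version, Lemma~\ref{lem:single-step}, applied to the incidence graph $I(G)$. Then I account for the exponentially many new edge-nodes of the complete hypergraph by attaching leaf bags.

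First I compare $I(G')$ with $I(G)$. Let $E_C:=\{e\in E: e\cap C\neq\emptyset\}$. An edge $e\in E$ with $e\cap C=\emptyset$ survives unchanged in $G'$, with the same incidences. An edge $e\in E_C$ becomes $e\setminus C$ when $|e\setminus C|\ge 2$. Every $u\in e\setminus C$ lies in $N(C)$ by \eqref{hyphood}, so $e\setminus C\subseteq N(C)$, and this edge is among the subsets $f\subseteq N(C)$ added by the complete hypergraph. Hence $I(G')$ consists of:
\begin{itemize}
\item the subgraph of $I(G)$ induced by $(V\setminus C)\cup(E\setminus E_C)$;
\item one new node for each $f\subseteq N(C)$ with $|f|\ge 2$, adjacent exactly to the nodes of $f$.
\end{itemize}
Some edge $e\in E\setminus E_C$ may coincide as a set with such an $f$. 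In that case the two nodes are identified, and the argument below handles that node like any other $f$.

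Next, let $S:=C\cup E_C\subseteq V\cup E$. I claim $S$ induces a connected subgraph of $I(G)$. Since the subhypergraph $G_C$ is connected, any two nodes of $C$ are joined by a chain of edges $e_j\cap C$ of $G_C$ with consecutive members intersecting. The corresponding edges $e_j\in E_C$ give a path in $I(G)$ through $C\cup E_C$. Every node of $E_C$ is adjacent in $I(G)$ to some node of $C$. Next I compute the neighborhood of $S$ in $I(G)$:
\begin{itemize}
\item the neighbors of nodes of $C$ are edge-nodes containing them, which lie in $E_C\subseteq S$;
\item the neighbors of $e\in E_C$ outside $S$ are the nodes $u\in e\setminus C$, which are exactly the elements of $N(C)$.
\end{itemize}
So $N_{I(G)}(S)=N(C)$. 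Lemma~\ref{lem:single-step} then gives a graph $\Gamma$, obtained from $I(G)$ by deleting $S$ and making $N(C)$ a clique, with ${\rm tw}(\Gamma)\le\max(\itw(G),|N(C)|-1)$.

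Finally, I build a decomposition of $I(G')$ from a tree decomposition $(\X,T)$ of $\Gamma$ of this width. Because $N(C)$ is a clique in $\Gamma$, the Helly property of subtrees gives a bag $X^*\supseteq N(C)$. For each new edge-node $f\subseteq N(C)$, I attach to $X^*$ a leaf bag $N(C)\cup\{f\}$ of size $|N(C)|+1$. The three axioms can be checked as follows:
\begin{itemize}
\item every incidence $\{u,f\}$ is covered by the leaf bag of $f$;
\item each $f$ occurs in exactly one bag;
\item each $u\in N(C)$ occurs in $X^*$ and in leaves adjacent to $X^*$, so its occurrence set stays connected;
\item the clique edges inside $N(C)$ are not needed, since dropping edges preserves validity.
\end{itemize}
The resulting width is $\max(\itw(G),|N(C)|-1,|N(C)|)\le\max(\itw(G),|N(C)|)$. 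The case $N(C)=\emptyset$ is trivial, because then $I(G')$ is an induced subgraph of $I(G)$.

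The step I expect to need the most care is the bookkeeping in the first paragraph: showing that every surviving trace $e\setminus C$ of an edge in $E_C$ is absorbed by the complete hypergraph on $N(C)$. That is what makes deleting all of $E_C$ legitimate, and it ensures the neighborhood of $S$ in $I(G)$ is exactly $N(C)$.
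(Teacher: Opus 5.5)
Your proposal is correct and follows the paper's proof essentially step for step. It uses the same set $S=C\cup\{e\in E: e\cap C\neq\emptyset\}$, the same connectivity and neighborhood argument giving $N_{I(G)}(S)=N(C)$, Lemma~\ref{lem:single-step} applied to $I(G)$, and leaf bags attached to a bag containing $N(C)$; the paper uses the smaller leaves $f\cup\{w_f\}$, with the same width bound. Your bookkeeping of the traces $e\setminus C$ and of old edges coinciding with some $f$ is more careful than the paper's. For such an identified node, either delete it from the old bags or skip its leaf, so that its bags genuinely form a connected subtree.
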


\begin{proof}
Let $I(G) =(\bar V, \bar E)$ denote the incidence graph of the hypergraph $G$.
Define 
$$
S:=C\cup \{e\in E : e\cap C\neq\emptyset\}.
$$ 
Note that $S \subseteq \bar V$.
We first show that the subgraph of $I(G)$ induced by $S$ is connected.
Denote by $G_C = (C, E_C)$ the subhypergraph of $G$ induced by $C$.
Take any two nodes $u,v\in C$. Since by assumption $G_C$ is connected, there exist edges
$f_1,\dots,f_k \in E_C$ for some $k \geq 1$
such that $u\in f_1$, $v\in f_k$, and $f_j\cap f_{j+1}\neq\emptyset$ for all $j=1,\dots,k-1$. For each $j\in[k]$, choose an edge  $e_j\in E$ such that $f_j=e_j\cap C$. Moreover, for each $j\in[k-1]$, choose
a node $w_j\in f_j\cap f_{j+1}\subseteq C$.
Define the sequence of nodes of $I(G)$:
$u, e_1, w_1, e_2, w_2, \dots, w_{k-1}, e_k, v$.
This is a path in $I(G)$ because $u\in e_1$, $v \in e_k$ and for each $j\in[k-1]$, we have $w_j \in e_j \cap e_{j+1}$.
Moreover, every node on this path belongs to $S$, so $u$ and $v$ are connected in the subgraph $I(G)$ induced by $S$.
Now let $e\in E$ be any edge with $e\cap C\neq\emptyset$ implying that $e \in S$. Pick $u\in e\cap C$. Then $e$ is adjacent to $u$ in $I(G)$. Since every node in $C$ lies in the same connected component of the subgraph $I(G)$ induced by $S$, it follows that every such $e$ also lies in that same connected component. Hence, the subgraph $I(G)$ induced by $S$ is connected.

Denote by $N_{I(G)}(S)$ the neighborhood of $S$ in the graph $I(G)$. We claim that 
\begin{equation}\label{use1}
N_{I(G)}(S) = N(C),
\end{equation}
where $N(C)$ is defined by~\eqref{hyphood}.
To see this,  let $v \in V \setminus C$. Then $v$ is adjacent in $I(G)$ to a node of $S$ if and only if there exists an edge $e \in E$ such that $v \in e$ and $e \cap C \neq \emptyset$.
This is precisely the definition of $v \in N(C)$. Therefore,
$N_{I(G)}(S) \cap V = N(C)$. On the other hand, consider some $e \in E \setminus S$. By definition of $S$, this means that $e \cap C = \emptyset$. If $e$ were adjacent to some node of $S$ in $I(G)$, then there would exist a node $v \in C$ such that $v \in e$, which contradicts $e \cap C = \emptyset$. Hence, no $e \in E \setminus S$ is adjacent to $S$. Therefore, equality~\eqref{use1} holds.

Let $H$ be the graph obtained from $I(G)$ by removing $S$ and making its neighborhood a clique.  Since the subgraph of $I(G)$ induced by $S$
is connected,  by Lemma~\ref{lem:single-step} and equality~\eqref{use1} we have:
\begin{equation}\label{eq:twJ}
{\rm tw}(H) \leq \max\bigl(\itw(G),\, |N(C)|-1\bigr).
\end{equation}
Let $(\mathcal Y,T')$ be a tree decomposition of $H$ of width at most the right-hand side of~\eqref{eq:twJ}. Since $N(C)$ is a clique in $H$, there exists a node $t^*\in V(T')$ whose bag $Y_{t^*}$ contains $N(C)$; \ie 
\begin{equation}\label{use2}
N(C)\subseteq Y_{t^*}.
\end{equation}
We now construct a tree decomposition of $I(G')$ from $(\mathcal Y,T')$. From the definition of the hypergraph $G'$ it follows that the incidence graph $I(G')$ is obtained from $H$ by deleting the edges having both incident nodes in $N(C)$ and, for each subset $f\subseteq N(C)$ with $|f|\ge 2$, adding one new node $w_f$ adjacent to all nodes in $f$. First, observe that deleting edges does not affect the validity of a tree decomposition, so $(\mathcal Y,T')$ is also a tree decomposition of the graph obtained from $H$ after removing the edges with incident nodes in $N(C)$.
For each subset $f\subseteq N(C)$ with $|f|\ge 2$, add a new leaf node $t_f$ adjacent to $t^*$, and define its bag by
\begin{equation}\label{newbag}
Y_{t_f}:=f\cup\{w_f\}.
\end{equation}
Keep all old bags unchanged. Let $(\mathcal Y',T'')$ be the resulting family of bags and tree. We next verify that $(\mathcal Y',T'')$ is a tree decomposition of $I(G')$.

\medskip
\begin{enumerate}
    \item Every old node of $I(G')$ already belongs to some bag of $(\mathcal Y,T')$. Each new node $w_f$ belongs to the new bag $Y_{t_f}$ defined by~\eqref{newbag}. Hence condition~1 of a tree decomposition holds.

    \item Every old edge of $I(G')$ is an edge of the graph obtained from $H$ by deleting the edges inside $N(C)$, hence it is contained in some old bag. Now let $(u,w_f)$ be a new edge of $I(G')$. By construction, $u\in f$, hence both $u, w_f$ belong to the bag $Y_{t_f}$ defined by~\eqref{newbag}.
Hence condition~(2) of a tree decomposition holds.

\item Let $u$ be a node of $I(G')$. Then the following cases arise:

\begin{itemize}
    \item If $u$ is an old node not in $N(C)$, then no new bag contains $u$, implying that the set of bags containing $u$ is unchanged and remains connected.

    \item If $u$ is an old node and $u\in N(C)$, then by~\eqref{use2} we have $u\in Y_{t^*}$. Moreover, $u$ belongs to a new bag $Y_{t_f}$ exactly when $u\in f$, and every such new bag is attached as a leaf to $t^*$. Therefore, the set of bags containing $u$ is obtained from the old connected subtree containing $u$ by attaching some leaves to the node $t^*$, and hence remains connected.

    \item If $u$ is a new node, \ie $u=w_f$, then it belongs only to the single bag $Y_{t_f}$ defined by~\eqref{newbag}, so the set of bags containing $u$ is connected.
\end{itemize}
Hence condition~(3) of a tree decomposition holds.
\end{enumerate}
\medskip

Thus, $(\mathcal Y',T'')$ is a tree decomposition of $I(G')$. By~\eqref{newbag}, we can upper bound the size of the new bags as  $|Y_{t_f}|=|f|+1\le |N(C)|+1$. Therefore, the width of the tree decomposition $(\mathcal Y',T'')$ is at most
$\max\bigl(\itw(G),\ |N(C)|\bigr)$,
and this completes the proof.
\end{proof}

\begin{lemma}\label{lem:component-elimination-itw-hypergraph}
Let $G=(V,E)$ be a hypergraph, and let $S\subseteq V$ be nonempty. Denote by $G_S$ the subhypergraph of $G$ induced by $S$ and let
$C_1,\dots,C_p$ denote the connected components of $G_S$.
Define $G_0:=G$. For each $r\in[p]$, let $G_r$ be the hypergraph obtained from $G_{r-1}$ by first removing $C_r$ and then adding the complete hypergraph on $N(C_r)$.
Set
$d_{\max}:=\max_{r\in[p]} |N(C_r)|$.
Then
$$
\itw(G_p)\le \max\bigl(\itw(G),\, d_{\max}\bigr).
$$
\end{lemma}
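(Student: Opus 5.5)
The plan is induction on $r$, applying Lemma~\ref{lem:single-step-itw-hypergraph} once per component, in parallel with how Lemma~\ref{lemm:component} follows from Lemma~\ref{lem:single-step}. The inductive claim is
$$\itw(G_r)\le \max\bigl(\itw(G),\,d_{\max}\bigr) \quad \text{for } r=0,1,\dots,p.$$
The base case $r=0$ is trivial. For the step, we apply Lemma~\ref{lem:single-step-itw-hypergraph} to the hypergraph $G_{r-1}$ with the set $C_r$. This gives $\itw(G_r)\le \max(\itw(G_{r-1}),|N_{G_{r-1}}(C_r)|)$, provided two hypotheses hold, and the inductive hypothesis then closes the argument. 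The two facts to verify are:
\begin{itemize}
\item[(a)] the subhypergraph of $G_{r-1}$ induced by $C_r$ is connected;
\item[(b)] the neighborhood of $C_r$ computed in $G_{r-1}$ coincides with (or is contained in) $N(C_r)$ computed in $G$, so that its size is at most $d_{\max}$.
\end{itemize}

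The key invariant, proved by induction alongside the width bound, is the following. For $s\ge r$, the edges of $G_{r-1}$ that meet $C_s$ are exactly the sets $e\setminus(C_1\cup\dots\cup C_{r-1})$ with $e\in E$, $e\cap C_s\neq\emptyset$, and $|e\setminus(C_1\cup\dots\cup C_{r-1})|\ge2$. In other words, no added clique edge touches a later component. The reason is that the clique added at step $t$ lives on $N(C_t)$, and $N(C_t)\cap C_s=\emptyset$ for $s\neq t$. Indeed, if $u\in C_s$ shared an edge $e\in E$ with a node of $C_t$, then $e\cap S$ would be an edge of $G_S$ (it has size at least two) joining $C_s$ and $C_t$. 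That contradicts their being distinct components, and in the degenerate case $e\cap S$ is a singleton this cannot happen, since it already contains two nodes. Removing earlier components only deletes nodes outside $C_s$ and discards edges that became too small.

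Given the invariant, we verify the two facts. For (a), the edges of $G_{r-1}$ restricted to $C_r$ are $(e\setminus \bigcup_{t<r}C_t)\cap C_r=e\cap C_r$, the same traces as in $G$. Hence the induced subhypergraph on $C_r$ equals $(G_S)_{C_r}$, which is connected. The only care needed is that an edge $e$ with $|e\cap C_r|\ge2$ still has at least two nodes after deletion, which is clear. For (b), every node of $G_{r-1}$ sharing an edge with $C_r$ lies in some $e\setminus\bigcup_{t<r}C_t$ with $e\cap C_r\ne\emptyset$, so it belongs to $N(C_r)$. Thus $|N_{G_{r-1}}(C_r)|\le|N(C_r)|\le d_{\max}$.

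I expect the main obstacle to be bookkeeping rather than a deep difficulty. One must state the invariant precisely, so that added complete hypergraphs never attach to unprocessed components, and handle edges that shrink below size two under removal. Once the invariant is established, the width bound follows by chaining Lemma~\ref{lem:single-step-itw-hypergraph} $p$ times.
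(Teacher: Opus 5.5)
Your proposal follows the paper's proof essentially step for step. Both argue by induction over the components, applying Lemma~\ref{lem:single-step-itw-hypergraph} to $G_{r-1}$ and $C_r$ after checking that the trace of the edges on $C_r$ is unchanged and that no earlier modification reaches a later component.

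One small correction: in (b) you need the equality $N_{G_{r-1}}(C_r)=N(C_r)$, not just containment. $G_r$ adds the complete hypergraph on $N(C_r)$ computed in $G$, whereas the lemma only controls the hypergraph built on the neighborhood computed in $G_{r-1}$. The reverse inclusion follows at once from your own observation: any $e\in E$ meeting $C_r$ is disjoint from $C_1,\dots,C_{r-1}$, so it survives intact in $G_{r-1}$. This is exactly how the paper obtains $N_{G_k}(C_{k+1})=N(C_{k+1})$.
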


\begin{proof}
We prove the statement by induction on $p$.
In the base case, we have $p=1$ and the proof follows directly from Lemma~\ref{lem:single-step-itw-hypergraph}.
Now assume that $p\ge 2$ and that the statement holds for the first $k$ components, where $1\le k<p$. That is, assume that
\begin{equation}\label{eq:ind-hyp}
\itw(G_k)\le \max\Bigl(\itw(G),\, \max_{r\in[k]} |N(C_r)|\Bigr).
\end{equation}
We must prove that
\[
\itw(G_{k+1})\le \max\Bigl(\itw(G),\, \max_{r\in[k+1]} |N(C_r)|\Bigr).
\]
To do so, we will apply Lemma~\ref{lem:single-step-itw-hypergraph} to the hypergraph $G_k$ and the set $C_{k+1}$. To this end, we need to verify two facts:
\begin{enumerate}
    \item The subhypergraph of $G_k$ induced by $C_{k+1}$ is connected.
    Let $G_k=(V_k, E_k)$.
    Define $E[C_{k+1}]:= \{e \cap C_{k+1}: e \in E, |e \cap C_{k+1}| \geq 2\}$ and  $E_k[C_{k+1}]:= \{e \cap C_{k+1}: e \in E_k, |e \cap C_{k+1}| \geq 2\}$. To prove the statement, it suffices to show that 
    $E_k[C_{k+1}] = E[C_{k+1}]$.
First we show that $E_k[C_{k+1}] \subseteq E[C_{k+1}]$.
Let $f \in E_k[C_{k+1}]$. Then there exists $e \in E_k$ such that
$f = e \cap C_{k+1}$ and $|e \cap C_{k+1}| \ge 2$.
If $e \notin E$, then $e$ was added during the elimination of some $C_r$ with $r \in [k]$, implying that
$e \subseteq N(C_r)$.
Pick $v \in e \cap C_{k+1}$. Then $v \in N(C_r)$, so there exists $e' \in E$ such that
$v \in e'$ and $e' \cap C_r \neq \emptyset$.
Hence $e' \cap S$ intersects both $C_r$ and $C_{k+1}$, contradicting that they are distinct connected components of $G_S$. Therefore $e \in E$, and $f \in E[C_{k+1}]$.

Next we show that $E[C_{k+1}] \subseteq E_k[C_{k+1}]$.
Let $f \in E[C_{k+1}]$. Then there exists $e \in E$ such that
$f = e \cap C_{k+1}$ and $|e \cap C_{k+1}| \ge 2$.
If $e \notin E_k$, then $e$ was removed during the elimination of some $C_r$ with $r \in [k]$, so
$e \cap C_r \neq \emptyset$.
Then $e \cap S$ intersects both $C_r$ and $C_{k+1}$, again contradicting the definition of the components of $G_S$. Hence $e \in E_k$, and $f \in E_k[C_{k+1}]$.

\item The neighborhood of $C_{k+1}$ in $G_k$ is $N(C_{k+1})$; \ie 
$N_{G_k}(C_{k+1}) = N(C_{k+1})$.
We first show that $N_{G_k}(C_{k+1}) \subseteq N(C_{k+1})$.
Let $u \in N_{G_k}(C_{k+1})$. Then there exists $e \in E_k$ such that
$u \in e$ and $e \cap C_{k+1} \neq \emptyset$.
If $e \notin E$, then $e \subseteq N(C_r)$ for some $r \in [k]$. Pick $v \in e \cap C_{k+1}$. Then $v \in N(C_r)$, so there exists $e' \in E$ such that
$v \in e'$ and $e' \cap C_r \neq \emptyset$,
yielding a contradiction as above. Hence $e \in E$, and $u \in N(C_{k+1})$.
Next we show that $N(C_{k+1}) \subseteq N_{G_k}(C_{k+1})$.
Let $u \in N(C_{k+1})$. Then there exists $e \in E$ such that
$u \in e$, $e \cap C_{k+1} \neq \emptyset$.
If $e \notin E_k$, then $e \cap C_r \neq \emptyset$ for some $r \in [k]$, which again contradicts the definition of the connected components of $G_S$. Hence $e \in E_k$, and $u \in N_{G_k}(C_{k+1})$.
\end{enumerate}
\medskip
Therefore we can apply Lemma~\ref{lem:single-step-itw-hypergraph}  to the hypergraph $G_k$ and the set $C_{k+1}$ to obtain
\[
\itw(G_{k+1})\le \max\bigl(\itw(G_k),\, |N(C_{k+1})|\bigr).
\]
Using the induction hypothesis~\eqref{eq:ind-hyp}, we obtain
\[
\itw(G_{k+1})
\le
\max\Bigl(
\itw(G),\,
\max_{r\in[k]}|N(C_r)|,\,
|N(C_{k+1})|
\Bigr),
\]
which by definition of $d_{\max}$ completes the proof.
\end{proof}

We are now ready to prove Theorem~\ref{th: polyopt}.

\begin{proof}[Proof of Theorem~\ref{th: polyopt}]
For any $U \subseteq V$, denote by $x_U$ the vector containing all components $x_i$ of $x$ with $i \in U$.
From Lemma~\ref{cor:simple-cases} it follows that there exists a minimizer of Problem~\eqref{polyopt} at which all variables indexed by $V^-$ are binary.  Consequently,
\begin{equation}\label{eq:mixed-polyopt}
\min_{x\in[0,1]^V} f(x)
=
\min_{\substack{x_{V^-}\in\{0,1\}^{V^-}\\
x_{V^+}\in[0,1]^{V^+}}}
f(x_{V^-},x_{V^+}).
\end{equation}
We next decompose $f(x)$ according to the connected components of $G_{V^+}$. First observe that
$N(C)\subseteq V^-$ for every $C\in\C$. Indeed, suppose that $u\in N(C)\cap V^+$. Then there exists an edge $e\in E$ containing $u$ and a node of $C$. Hence, $e\cap V^+$ is an edge of $G_{V^+}$ containing $u$ and a node of $C$, contradicting the fact that $C$ is a connected component of $G_{V^+}$. Now consider any monomial $x^\alpha$ whose support intersects $V^+$. All nodes in $\supp(\alpha)\cap V^+$ belong to a unique component $C\in\C$. Indeed, this is immediate if $|\supp(\alpha)\cap V^+|=1$, while if $|\supp(\alpha)\cap V^+|\geq2$, then $\supp(\alpha)\cap V^+$ is an edge of $G_{V^+}$ and hence all its nodes belong to the same connected component. Moreover, every node in $\supp(\alpha)\setminus C$ belongs to $N(C)$. Thus, every monomial whose support intersects $V^+$ depends on the variables of exactly one component $C$ and possibly on variables in $N(C)$.
Let $f_{V^-}(x_{V^-})$ be the sum of all monomials of $f$ whose support is contained in $V^-$, and, for each $C\in\C$, let $f_C(x_C,x_{N(C)})$ be the sum of all monomials whose support intersects $C$. By the preceding observation, these collections of monomials form a partition of the monomials of $f$, and therefore
\begin{equation}\label{polyrep}
f(x)=f_{V^-}(x_{V^-})
+\sum_{C\in\C}f_C(x_C,x_{N(C)}).
\end{equation} 
Since $C \cap C' = \emptyset$ for all $C \neq C' \in \C$, from~\eqref{polyrep} it follows that
$$
\min_{x_{V^+} \in [0,1]^{V^+}} f(x_{V^-},x_{V^+})
=
f_{V^-}(x_{V^-})
+
\sum_{C \in \C} \min_{x_C \in [0,1]^C} f_C(x_C, x_{N(C)}).
$$
For each $C \in \C$ and each $z \in \{0,1\}^{N(C)}$, define
$$
\psi_C(z) := \min_{x_C \in [0,1]^C} f_C(x_C, z).
$$
It follows from~\eqref{eq:mixed-polyopt} that
\begin{equation}\label{use3}
\min_{x \in [0,1]^V} f(x)
=
\min_{x_{V^-} \in \{0,1\}^{V^-}}\Big(f_{V^-}(x_{V^-})
+
\sum_{C \in \C} \psi_C(x_{N(C)})\Big).
\end{equation}
We now examine the cost of computing the functions $\psi_C$. Fix $C\in\C$ and $z\in\{0,1\}^{N(C)}$. By assumption~(ii), $|C| \in O(1)$. Hence, by Proposition~\ref{cor:log-dim-poly-opt} and the assumption that $d$ is polynomially bounded in $\L$, the value
$\psi_C(z)$ and a corresponding minimizer can be computed exactly using a number of unit-cost rational arithmetic operations polynomial in $\L$. Moreover, $\psi_C(z)$ can be represented by a square-free polynomial with integer coefficients together with a rational isolating
interval, and a corresponding minimizer can be represented by a real univariate representation. The degrees and encoding lengths of these representations are polynomially bounded in $\L$. By assumption~(iii), $2^{|N(C)|}=|V|^{O(1)}$. Since $|\C|\leq |V|$, all values $\psi_C(z)$, for $C\in\C$ and $z\in\{0,1\}^{N(C)}$, together with corresponding minimizers, can be computed using a number of unit-cost algebraic RAM operations polynomial in $\L$.
Let $\alpha_1,\ldots,\alpha_M$
denote all the algebraic values obtained in this way. Then
$M=\sum_{C\in\C}2^{|N(C)|}=|V|^{O(1)}$,
and every $\alpha_j$ has a defining polynomial and isolating interval of degree and encoding length polynomially bounded in $\L$. Each function $\psi_C$ admits a unique multilinear representation $\psi_C(z)=\sum_{S\subseteq N(C)}
a_{C,S}\prod_{i\in S}z_i$,
where, by Mobius inversion,
$a_{C,S}=\sum_{T\subseteq S}(-1)^{|S|-|T|}
\psi_C(\mathbf 1_T)$,
and $\mathbf 1_T$ denotes the binary vector whose entries indexed by $T$ are equal to one and whose remaining entries are equal to zero. We store every coefficient $a_{C,S}$ as a formal integer linear combination of $\alpha_1,\ldots,\alpha_M$. Since
$\sum_{S\subseteq N(C)}2^{|S|}=3^{|N(C)|}=|V|^{O(1)}$,
all these representations can be constructed in polynomial time and have polynomial total encoding length. Therefore, the problem on the right-hand side of~\eqref{use3} can be represented as a binary polynomial optimization problem whose coefficients are formal rational linear combinations of
$1,\alpha_1,\ldots,\alpha_M$.

Let $G_{\rm red}$ denote the hypergraph containing the supports of cardinality at least two that occur in this binary polynomial representation.
Denote by $\bar G$ the hypergraph obtained from $G$ by removing nodes in $C$ and adding the complete hypergraph on $N(C)$ for all $C \in \C$. It then follows that $G_{\rm red}\subseteq\bar G$,
and hence $\itw(G_{\rm red})\leq \itw(\bar G)$.
If $V^+=\emptyset$, then $\C=\emptyset$ and $G_{\rm red} \subseteq G$. Hence, assumption~(i) implies
$\itw(G_{\rm red})\in O(\log|V|)$.
Now suppose that $V^+\neq\emptyset$.  
By Lemma~\ref{lem:component-elimination-itw-hypergraph}, and assumptions~(i) and~(iii) in the statement of the theorem, we have
$$
\itw(G_{\rm red}) \leq \itw(\bar G)
\le \max\Bigl(\itw(G),\ \max_{C \in \C} |N(C)|\Bigr)
\in O(\log |V|).
$$
Furthermore, $G_{\rm red}$ has a polynomial number of edges, since each function $\psi_C$ contributes at most $2^{|N(C)|}$ candidate monomials.
Therefore, it remains to solve a binary polynomial optimization problem over $\bar G$ with the incidence treewidth $\itw(\bar G)$.

By Proposition~\ref{binaryitw}, an optimal binary solution for a hypergraph of incidence treewidth $t$ can be found using
$2^{O(t)}\poly(|V|,|E|)$
additions and comparisons of objective function values.
We store every intermediate objective value as a formal rational linear combination of
$1,\alpha_1,\ldots,\alpha_M$. Addition of two such expressions is performed coordinate-wise. 
To compare two intermediate objective values, it suffices to determine the sign of their difference, which is a rational linear combination of the algebraic numbers
$\alpha_1,\ldots,\alpha_M$.
Such a difference is a real algebraic expression of the type considered by Burnikel et al.~\cite{BurFunMehSch09}. By Theorem~1 of~\cite{BurFunMehSch09}, if such an expression is nonzero, its absolute value is bounded from below by an explicitly computable separation bound. In our setting, the number of algebraic numbers occurring in the expression, the degrees of their defining polynomials, and the encoding
lengths of their representations are polynomially bounded in $\L$. Consequently, the binary encoding length of the precision required by this separation bound is polynomially bounded in $\L$. Using the certified quadratically convergent root-refinement algorithm of Kerber and Sagraloff~\cite{KerSag15}, the isolating intervals of the
$\alpha_j$'s can therefore be refined to the required precision using a number of unit-cost rational arithmetic operations polynomial in $\L$. Evaluating the resulting rational linear combination then
determines its sign exactly. Consequently, every addition and comparison required by
Proposition~\ref{binaryitw} can be performed using polynomially many unit-cost algebraic RAM operations. Since
$\itw(G_{\rm red})\in O(\log|V|)$, and since $M$, the number of edges of $G_{\rm red}$, and all algebraic representation sizes are polynomially bounded in $\L$, the reduced binary polynomial optimization problem can be solved exactly in time polynomial in $\L$ in the unit-cost algebraic RAM model.
Let $x_{V^-}^*$ be an optimal binary solution obtained in this manner. For each $C\in\C$, retrieve the minimizer previously computed for $\psi_C(x_{N(C)}^*)$.
Since the components in $\C$ are pairwise disjoint, these minimizers, together with $x_{V^-}^*$, define a vector $x^*\in[0,1]^V$. By~\eqref{use3}, this vector is a minimizer of Problem~\eqref{polyopt}.
The binary coordinates of $x^*$ are represented explicitly, and the continuous coordinates are represented by one real univariate representation for each component $C\in\C$. The optimal value is represented by the exact formal sum
$f_{V^-}(x_{V^-}^*)
+\sum_{C\in\C}\psi_C(x_{N(C)}^*)$.
All these representations have total encoding length polynomial in $\L$. This completes the proof.
\end{proof}

Let us conclude by commenting on some of the assumptions and limitations of Theorem~\ref{th: polyopt}.
First, the condition on the size of the continuous components is more restrictive than in the quadratic case of Theorem~\ref{th: quad}. In the quadratic setting, the complexity of each continuous component is controlled by the parameter $\kappa_C$ through assumption~(ii).
Consequently, when $\kappa_C=\Theta(|C|)$, this condition requires $|C|\in O(\log |V|)$, while if $\kappa_C=\Theta(1)$, there is no restriction on the size of $C$. In contrast, Theorem~\ref{th: polyopt} requires $|C|\in O(1)$ for all $C\in\C$.  This difference arises from the continuous subproblems. The quadratic result exploits the special structure of quadratic functions to reduce the local optimization problem to a polynomial number of convex quadratic programs, with complexity controlled by $\kappa_C$. For general polynomial optimization, the fixed-dimensional algorithm used in Proposition~\ref{cor:log-dim-poly-opt} requires $(kd)^{O(k)}\poly(\L)$ arithmetic operations in dimension $k$. Since $d$ is allowed to be polynomial in $\L$, this bound yields polynomial-time solvability only when $k$ is bounded by a constant. Thus, the higher-degree result requires a substantially stronger restriction on the continuous components.
Second, unlike Theorem~\ref{th: quad}, which gives polynomial-time solvability in the standard bit model, Theorem~\ref{th: polyopt} is stated in the unit-cost algebraic RAM model. This distinction arises because eliminating a continuous component in the higher-degree case may produce algebraic, rather than rational, optimal values. The unit-cost algebraic RAM model allows these values to be represented symbolically and compared exactly in polynomially many arithmetic operations, as used in the proof of Theorem~\ref{th: polyopt}.
Finally, Proposition~\ref{binaryitw} shows that binary polynomial optimization is tractable not only under bounded incidence treewidth, but also under $\beta$-acyclicity of the corresponding hypergraph. However, the proof of Theorem~\ref{th: polyopt} relies on eliminating continuous components by introducing complete hypergraphs on their neighborhoods, and this operation introduces $\beta$-cycles. Consequently, extending Theorem~\ref{th: polyopt} to $\beta$-acyclic interaction hypergraphs would require different techniques.

\begin{footnotesize}
\bibliographystyle{plain}
\bibliography{biblio}
\end{footnotesize}

\end{document}